\theoremstyle{plain}
\newtheorem{Thm}[equation]{Theorem}
\newtheorem{Cor}[equation]{Corollary}
\newtheorem{Prop}[equation]{Proposition}
\newtheorem{Lem}[equation]{Lemma}
\numberwithin{equation}{section}
\theoremstyle{remark}
\newtheorem{Def}[equation]{Definition}
\newtheorem{Rmk}[equation]{Remark}
\newcommand{\Hom}{\operatorname{Hom}}
\newcommand{\Ext}{\operatorname{Ext}}
\newcommand{\soc}{\operatorname{soc}}
\newcommand{\Gal}{\operatorname{Gal}}
\newcommand{\GL}{\operatorname{GL}}
\newcommand{\Sp}{\operatorname{Sp}}
\newcommand{\Ind}{\operatorname{Ind}}
\newcommand{\Jac}{\operatorname{Jac}}
\newcommand{\id}{\operatorname{id}}
\newcommand{\C}{\mathbb C}
\newcommand{\Z}{\mathbb{Z}}
\newcommand{\R}{\mathbb{R}}
\newcommand{\bm}{\begin{multline*}}
\newcommand{\tu}{\end  {multline*}}
\def\calC{\mathcal{C}}
\def\calF{\mathcal{F}}
\def\calR{\mathcal{R}}
\def\lra{\longrightarrow}
\def\wt{\widetilde}
\DeclareFontFamily{U}{mathx}{\hyphenchar\font45}
\DeclareFontShape{U}{mathx}{m}{n}{
      <5> <6> <7> <8> <9> <10>
      <10.95> <12> <14.4> <17.28> <20.74> <24.88>
      mathx10
      }{}
\DeclareSymbolFont{mathx}{U}{mathx}{m}{n}
\DeclareMathSymbol{\bigtimes}{1}{mathx}{"91}
\title{Big Theta equals small theta generically}
\author{Rui Chen} 
\address{Institute for Advanced Study in Mathmatics, ZheJiang University, East No.7 Building, Zijingang Campus, Hangzhou 310058, China }
\email{rchenmat@zju.edu.cn}
\author{Jialiang Zou}
\address{Department of Mathematics, University of Michigan, 530 Church St, Ann Arbor, MI 48109, U.S.A. }
\email{jlzou@umich.edu}
\begin{document}

\begin{abstract}
In this paper we consider the theta correspondence over a non-Archimedean local field. Using the homological method and the theory of derivatives, we show that under a mild condition the big theta lift is irreducible.
\end{abstract}

\maketitle

\section{Introduction}
The theory of theta correspondence plays an important role in the Langlands program. Roughly speaking, given a reductive dual pair $G\times H$, one can attach a Weil representation $\Omega$ to it. The goal of the (local) theory of theta correspondence is to study the branching of $\Omega$ as a $G\times H$ representation, or more precisely, to study the maximal $\pi$-isotypic quotient (as a representation of $H$)
\[
    \Theta(\pi)
\]
of $\Omega$ for all irreducible smooth representation $\pi$ of $G$. This representation $\Theta(\pi)$ of $H$ is called the ``\textit{big theta lift of $\pi$}''. The celebrated Howe duality conjecture (now a theorem \cite{MR1159105} \cite{MR3502978} \cite{MR3454380}) asserts that if $\Theta(\pi)$ is non-zero then it admits a unique irreducible quotient, which is usually denoted by $\theta(\pi)$ and is called the ``\textit{small theta lift of $\pi$}''. The map
\[
    \pi\longmapsto \theta(\pi)
\]
enjoys many good properties, for example it is injective and compatible with $\gamma$-factors and Plancherel measures. Thanks to the work of Atobe-Gan \cite{MR3714507}, Baki\'{c}-Hanzer \cite{MR4279104} and many others, now we have a complete description of $\theta(\pi)$ in terms of $\pi$ in non-Archimedean case.\\

However in many applications of theta correspondence, instead of the small theta lift one needs to deal with the big theta lift. So it is important to know that in favorable cases, whether we have
\[
    \Theta(\pi) \overset{\text{?}}{=} \theta(\pi).
\]
It is expected that this equality holds ``generically'', and indeed one knows that:
\begin{itemize}
\item in non-Archimedean case, the equality holds if: 
\begin{itemize}
\item[-] $\pi$ is tempered and $G\times H$ is an (almost) equal rank dual pair with $G$ to be the smaller member (see \cite[App. C]{MR3166215});

\vskip 5pt

\item[-] $\pi$ is tempered, satisfying some technical conditions (in terms of its L-parameters), and $G$ is the bigger member in the dual pair $G\times H$ (see \cite[Prop. 5.4]{MR3714507});
\end{itemize}

\vskip 5pt

\item in Archimedean case, the equality holds if $\pi$ is unitary and $G\times H$ is a stable range dual pair with $G$ to be the smaller member (see \cite[Thm. A]{MR3305312}).

\vskip 5pt
\end{itemize}
Besides these and some cases in type II (e.g. \cite[Thm. 1.7]{MR3803804}), it seems to the authors that we only have very limited knowledges on the problem of $\Theta$ vs $\theta$.\\

In this paper we follow an idea of Adams-Prasad-Savin \cite{MR3753906} to attack this problem in the non-Archimedean case. The key point is that the contragredient of the big theta lift can be interpreted functorially as 
\[
    \Theta(\pi)^\vee \simeq \Hom_G(\Omega,\pi)_{sm},
\]
where the subscript ``$sm$'' means taking $H$-smooth vectors. By considering the derived functors 
\[
    \Ext_G^i(\Omega,-)_{sm},
\]
we first show that the functor $\Hom_G(\Omega,-)_{sm}$ is exact when restricted to a subcategory $\calR^{\dagger}(G)$ consisting of finite length representations of $G$ satisfying a mild condition on \textit{factors} (Proposition \ref{P:Ext-Vanish}). Then we use the theory of derivatives to analyze $\Theta(\pi)$ and show its irreducibility (Theorem \ref{T:Main} and Theorem \ref{T:Main2}). As an application we deduce a non-Archimedean analogue of Loke-Ma's result (Corollary \ref{T:AnalogLokeMa}).

\vskip 10pt

\section{Local theta correspondence}\label{S:Theta}
In this section we fix some notations and recall some basic facts of theta correspondence. Let $F$ be a non-Archimedean local field of characteristic $0$, and $E$ either $F$ itself or a quadratic field extension of $F$. Accordingly set 
\[
    c=\begin{cases}
        \id \quad & \textit{if $E=F$};\\
        \textit{the non-trivial element in }\Gal(E/F) \quad & \textit{if $E\neq F$}. 
    \end{cases}
\]
Fix $\epsilon\in\{\pm1\}$. Let $W$ be an $n$-dimensional $(-\epsilon)$-Hermitian space over $E$. Likewise, let $V$ be an $m$-dimensional $\epsilon$-Hermitian space over $E$. We set:
\[
    \ell = n-m+\epsilon_0,
\]
where
\[
    \epsilon_0=\begin{cases}
        \epsilon \quad & \textit{if $E=F$};\\
        0 \quad & \textit{if $E\ne F$}.
    \end{cases}
\]
Let:
\[
    G(W) = \begin{cases}
        \textit{the metaplectic group }{\rm Mp}(W), \quad & \textit{if $E=F$, $\epsilon=+1$ and $m$ is odd};\\
        \textit{the isometry group of }W, \quad & \textit{otherwise}.
    \end{cases}
\]
When $W=0$, we set $G(W)$ to be the trivial group. We define the group $H(V)$ similarly by switching the roles of $W$ and $V$. Then $G(W)$ and $H(V)$ form a reductive dual pair. To consider the theta correspondence for the reductive dual pair $G(W)\times H(V)$, one requires some additional data:
\begin{itemize}
\item a non-trivial additive character $\psi_F$ of $F$;
\vskip 5pt

\item a pair of characters $\chi_V$ and $\chi_W$ of $E^\times$ satisfying certain conditions (see \cite[Sect. 3.2]{MR3166215} for more details).
\vskip 5pt
\end{itemize}
To elaborate, the tensor product $V\otimes W$ has a natural symplectic form, which induces a natural map
\begin{equation*}
G(W)\times H(V)\longrightarrow \Sp(W\otimes V).
\end{equation*}
One has the $\C^\times$-cover $\widetilde{\Sp}(W\otimes V)$ of $\Sp(W\otimes V)$, and the character $\psi_F$ determines a Weil representation $\Omega_{\psi_F}$ of $\widetilde{\Sp}(W\otimes V)$. The datum $(\psi_F,\chi_V,\chi_W)$ then allows one to specify a splitting of the cover over $G(W)\times H(V)$. Hence, we have a Weil representation $\Omega=\Omega_{V,W,\psi_F}$ of $G(W)\times H(V)$. 

\vskip 5pt

\begin{Rmk}
One special case is worth noting: when $W$ is a split $2$-dimensional orthogonal space, the group $G(W)$ has a unique (standard) parabolic subgroup $P$, namely the one stabilizing an isotropic line in $W$. By our convention on the isometry group of zero space, the Levi subgroup of $P$ should be understood as $\GL_1$, and indeed one can show that under this convention there is no supercuspidal representation of $G(W)$.
\end{Rmk}

\vskip 5pt

Given an irreducible representation $\pi$ of $G(W)$, the maximal $\pi$-isotypic quotient of $\Omega$ is of the form 
\begin{equation*}
\Theta_{V,W,\psi_F}(\pi)\boxtimes\pi
\end{equation*}
for some smooth representation $\Theta_{V,W,\psi_F}(\pi)$ of $H(V)$ of finite length. By the Howe duality \cite{MR1159105} \cite{MR3502978} \cite{MR3454380}, we have:
\begin{itemize}
\item the maximal semi-simple quotient $\theta_{V,W,\psi_F}(\pi)$ of $\Theta_{V,W,\psi_F}(\pi)$ is irreducible if $\Theta_{V,W,\psi_F}(\pi)$ is non-zero;
\item let $\pi_1$ and $\pi_2$ be irreducible smooth representations of $G(W)$, such that both $\theta_{V,W,\psi_F}(\pi_1)$ and $\theta_{V,W,\psi_F}(\pi_2)$ are non-zero. If $\pi_1\not\simeq\pi_2$, then $\theta_{V,W,\psi_F}(\pi_1)\not\simeq\theta_{V,W,\psi_F}(\pi_2)$.
\end{itemize}
\vskip 5pt
From now on we fix a non-trivial additive character $\psi_F$ and a pair of splitting characters $(\chi_V,\chi_W)$ once for all, and omit them from subscripts of various notations. As we have explicated in the introduction, by definition one has
\[
    \Theta_{V,W}(\pi)^\vee \simeq \Hom_{G(W)}\left(\Omega_{V,W},\pi\right)_{sm}
\]
for any irreducible smooth representation $\pi$ of $G(W)$. This gives a functorial interpretation of the big theta lift. 

\vskip 10pt

\section{The theory of derivatives}

In this section we briefly recall some results from the theory of derivatives for classical groups. For simplicity we only state results for $G(W)$, but all results in this section also hold 
for $H(V)$. Firstly we fix some notations and conventions. 
\begin{itemize}
\item Let $\nu$ be the absolute value of $E^\times$, and also regarded as a character of each general linear group $\GL_k=\GL_{k}(E)$ through the determinant map. 

\vskip 5pt

\item The set of all irreducible supercuspidal representation of $\GL_k$ will be denoted by $\calC(\GL_k)$, and by $\calC_{unit}(\GL_k)$ the subset consisting of irreducible unitary supercuspidal representations. 

\vskip 5pt

\item We shall use $\wt{\GL}_k$ to denote the two fold cover of $\GL_k$ as in \cite[Sect. 2.5]{MR3166215}. Using the additive character $\psi_F$ one can define a genuine character $\chi_{\psi_F}$ of $\wt{\GL}_k$ (see \cite[Sect. 2.6]{MR3166215}), and a bijection
\[
    \tau\longmapsto \tau\chi_{\psi_F}
\]
between the set of equivalence classes of smooth representations of $\GL_k$ and the set of equivalence classes of smooth genuine representations of $\wt{\GL}_k$.

\vskip 5pt

\item If $G$ is a general linear group $\GL_k(E)$ or a classical group as in the setting of Section \ref{S:Theta}, we shall write $\mathsf{R}(G)$ for the category of all 
\[
    \begin{cases}
    \textit{smooth genuine reprsentations of }G, \quad & \textit{if }G={\rm Mp}(W);\\
    \textit{smooth representations of }G, \quad & \textit{otherwise}.
    \end{cases}
\] 
The subcategory of $\mathsf{R}(G)$ consisting of finite length objects will be denoted by $\calR(G)$.

\vskip 5pt

\item For a finite length representation $\Pi\in\calR(G)$, we denote by $\soc(\Pi)$ the socle of $\Pi$, namely the maximal semi-simple subrepresentation of $\Pi$; $\Pi$ is said to be \textit{socle irreducible} (``SI'' for short), if $\soc(\Pi)$ is irreducible, and occurs in the semi-simplification $s.s.\Pi$ of $\Pi$ with multiplicity one.

\vskip 5pt

\item Let $\tau$, $\tau_1$ and $\tau_2$ be smooth representations of general linear groups, and $\pi$ smooth representation of a classical group as in Section \ref{S:Theta}. We shall use 
\[
    \tau_1\times \tau_2 \quad \textit{and} \quad \tau\rtimes\pi
\]
to denote the normalized parabolic induction for general linear groups and classical groups. In the case of metaplectic groups, the symbol $\tau\rtimes \pi$ should be understood as the normalized parabolic induction  
\[
    \Ind_P^G\left(\tau\chi_{\psi_F}\boxtimes \pi\right),
\]
as in \cite[Sect. 2.6]{MR3166215}. If $\rho$ is a smooth representation of a general linear group, we also put
\[
    \rho^k \coloneqq \underbrace{\rho\times\cdots\times\rho}_{k\textit{-times}}.
\]
\end{itemize}

Let $\pi\in\calR(G(W))$. For an irreducible supercuspidal representation $\rho\in\calC(\GL_d)$ (where $d\geq 1$) and a positive integer $k\in\Z_{>0}$, the $k$-th $\rho$-derivative $D_\rho^{(k)}\pi$ is defined as follows. Let $P$ be the maximal parabolic subgroup of $G(W)$ with Levi component $\GL_{dk}\times G(W_0)$ (or $\wt{\GL}_{dk}\times_{\mu_2} G(W_0)$ if $G(W)$ is a metaplectic group), where $W_0\subset W$ is a non-degenerate subspace of proper dimension such that $W/W_0$ is split. If the semi-simplified Jacquet module 
\[
    s.s.\Jac_P\pi = \sum_i\tau_i\boxtimes\sigma_i \quad (\textit{or, } s.s.\Jac_P\pi = \sum_i\tau_i\chi_{\psi_F}\boxtimes\sigma_i \textit{ if $G(W)$ is metaplectic}~)
\]
for some irreducible representations $\tau_i$ and $\sigma_i$ of $\GL_{dk}$ and $G(W_0)$, then we define
\[
    D_\rho^{(k)}\pi = \sum_{\{i:\tau_i\simeq \rho^k\}} \sigma_i.
\]
When $G$ does not have such standard parabolic subgroup $P$, we interpret $D_\rho^{(k)}\pi$ to be $0$. If $D_\rho^{(k)}\pi \neq 0$ but $D_\rho^{(k+1)}\pi = 0$, we say that $D_\rho^{(k)}\pi$ is the highest $\rho$-derivative; if $D_\rho^{(1)}\pi = 0$, we say that $\pi$ is $\rho$-reduced. The theory of $\rho$-derivatives has a huge difference between the non conjugate self-dual case and the conjugate self-dual case. We consider these cases separately.\\

\noindent\underline{\textbf{Non conjugate self-dual case:}} when $\rho$ is non conjugate self-dual, the theory is easy. We have the following result. Readers may consult to \cite[Lem. 3.1.3]{MR3268853} and \cite[Sect. 3.2]{MR4549708} for more details.

\begin{Thm}\label{T:DerNonSelfDual}
Let $\pi$ be an irreducible smooth representation of $G(W)$.
\begin{enumerate}
\item If $D_\rho^{(k)}\pi$ is the highest $\rho$-derivative of $\pi$, then $D_\rho^{(k)}\pi$ is irreducible, and
\[
    \pi\xhookrightarrow{\quad}\rho^k\rtimes D_\rho^{(k)}\pi.
\]

\item For any positive integer $k$, the induced representation $\rho^k\rtimes\pi$ is SI.
\end{enumerate}
\end{Thm}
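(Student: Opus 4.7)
The plan is to combine the geometric lemma (in Tadi\'c's form for classical groups) with Frobenius reciprocity, exploiting non conjugate self-duality of $\rho$ to drastically simplify the Jacquet module $\Jac_P(\rho^k \rtimes \sigma)$, where $P$ is the standard maximal parabolic with Levi $\GL_{dk} \times G(W_0)$. The first step I would carry out is the identification of the $\rho^k \boxtimes ?$ component of the semisimplified Jacquet module. The geometric lemma writes each subquotient in terms of a decomposition $a + b + d' = k$ and an irreducible piece $\tau \boxtimes \sigma'$ of the full Jacquet module of $\sigma$, with $\GL$-component $(\rho^{c,\vee})^a \times \rho^{d'} \times \tau$. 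Since $\rho \not\simeq \rho^{c,\vee} \nu^x$ for any $x \in \R$ by non-self-duality, any shuffle with $a \ge 1$ introduces $\rho^{c,\vee}$ into the cuspidal support and thus cannot yield $\rho^k$. Only the $a = 0$ shuffles survive, giving
\[
\sum_{b=0}^{k} \rho^k \boxtimes \bigl(\rho^b \rtimes D_\rho^{(b)}\sigma\bigr).
\]
In particular, if $\sigma$ is $\rho$-reduced, the only surviving term is $\rho^k \boxtimes \sigma$, with multiplicity one.

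For part (1), assuming $D_\rho^{(k)}\pi$ is the highest $\rho$-derivative, I would first observe that any irreducible constituent $\sigma'$ of $D_\rho^{(k)}\pi$ must itself be $\rho$-reduced, since otherwise composing Jacquet functors would produce $D_\rho^{(k+1)}\pi \ne 0$. Selecting $\rho^k \boxtimes \sigma'$ as an irreducible quotient of $\Jac_P \pi$ (which exists by a standard co-socle argument on the $\rho^k$-isotypic part) and applying Frobenius reciprocity produces a nonzero, hence injective, map $\pi \hookrightarrow \rho^k \rtimes \sigma'$. Applying $\Jac_P$ (which is exact) then gives $\Jac_P \pi \hookrightarrow \Jac_P(\rho^k \rtimes \sigma')$, and the $\rho$-reduced case of the first step forces $D_\rho^{(k)}\pi \subseteq \sigma'$. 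Since $\sigma'$ is irreducible, $D_\rho^{(k)}\pi = \sigma'$, which establishes the irreducibility of the highest derivative together with the claimed embedding.

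For part (2), given any irreducible subrepresentation $\sigma_0 \hookrightarrow \rho^k \rtimes \pi$, Frobenius reciprocity shows $\pi$ is a constituent of $D_\rho^{(k)}\sigma_0$. Writing $m$ and $k^*$ for the $\rho$-depths of $\pi$ and $\sigma_0$ respectively, conservation of cuspidal support forces $k^* = k + m$, and iterating part (1) identifies $D_\rho^{(k^*)}\sigma_0 = D_\rho^{(m)}\pi$, which is irreducible and $\rho$-reduced. Hence $\sigma_0$ is the unique irreducible subrepresentation of $\rho^{k+m} \rtimes D_\rho^{(m)}\pi$ (uniqueness from the first step applied to the $\rho$-reduced $D_\rho^{(m)}\pi$), which already yields socle irreducibility. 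Multiplicity one in the composition series would then follow by a comparable Jacquet module comparison at the deeper parabolic with Levi $\GL_{d(k+m)} \times G(W_{00})$, where the first step applies unambiguously to the $\rho$-reduced piece $D_\rho^{(m)}\pi$.

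The main obstacle will be the first step: precisely identifying which shuffles in Tadi\'c's formula contribute to the $\rho^k$-isotypic part. It is exactly here that the non-self-duality hypothesis is essential, through the rigidity of cuspidal supports; in the conjugate self-dual case the surviving extra shuffles are the source of R-group phenomena and additional reducibility, treated separately in the parallel self-dual theorem. Once the first step is secured, part (1) is a clean Frobenius-and-exactness argument, and part (2) follows by iterating (1) together with cuspidal-support bookkeeping.
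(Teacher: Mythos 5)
Your proposal follows the same route as the sources the paper cites for this theorem (the paper itself gives no proof, referring to Jantzen and Atobe--M\'inguez): Tadi\'c's structure formula $\mu^*(\rho^k\rtimes\sigma)=M^*(\rho^k)\rtimes\mu^*(\sigma)$, the observation that for $\rho\not\simeq\left(\rho^c\right)^\vee$ only the shuffles with $a=0$ can contribute to the $\rho^k$-isotypic part (so that for $\rho$-reduced $\sigma$ the term $\rho^k\boxtimes\sigma$ occurs with multiplicity one), and then Frobenius reciprocity plus exactness of the Jacquet functor. Your treatment of part (1) and the multiplicity-one counting underlying part (2) are correct in substance.

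Two points need repair. First, in your opening step you invoke ``$\rho\not\simeq\left(\rho^c\right)^\vee\nu^x$ for any $x\in\R$''; this is neither implied by the hypothesis (take $\rho=\rho_u\nu^{1/2}$ with $\rho_u$ conjugate self-dual: $\rho$ is not conjugate self-dual, yet $\left(\rho^c\right)^\vee\nu\simeq\rho$) nor needed, since the normalized structure formula produces $\left(\rho^c\right)^\vee$ with no twist, so only $\rho\not\simeq\left(\rho^c\right)^\vee$ is actually used. Second, and more seriously, the assertion that ``conservation of cuspidal support forces $k^*=k+m$'' is not a valid justification: cuspidal support does not determine $\rho$-depth (at a reducibility point $\rho=\rho_u\nu^{s_0}$ with $s_0\neq 0$ the two constituents of $\rho\rtimes\pi_{cusp}$ have the same cuspidal support but $\rho$-depths $1$ and $0$, their Jacquet modules being $\rho\boxtimes\pi_{cusp}$ and $\left(\rho^c\right)^\vee\boxtimes\pi_{cusp}$ respectively). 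The equality $k^*=k+m$ is true, but it must be proved with the same tools as the rest: the upper bound $k^*\le k+m$ follows from your first step applied to $\sigma_0\hookrightarrow\rho^k\rtimes\pi\hookrightarrow\rho^{k+m}\rtimes D_\rho^{(m)}\pi$, whose Jacquet modules have no $\rho^j$-isotypic constituents for $j>k+m$ because $D_\rho^{(m)}\pi$ is $\rho$-reduced; the lower bound $k^*\ge k+m$ follows from transitivity of Jacquet functors together with the irreducibility of $\rho^k\times\rho^m$, since $\rho^k\boxtimes\pi$ is a quotient of $\Jac_P\sigma_0$ and $D_\rho^{(m)}\pi\neq 0$. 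With this substitution (or by arguing directly from the embedding $\rho^k\rtimes\pi\hookrightarrow\rho^{k+m}\rtimes D_\rho^{(m)}\pi$, which lets you bypass $k^*$ altogether), your argument goes through.
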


\noindent\underline{\textbf{Conjugate self-dual case:}} when $\rho$ is conjugate self-dual, instead of the original $\rho$-derivative, Atobe-M\'inguez defined two new derivatives. For a segment $\{a,a-1,\cdots,b\}\subset \R$,
denote by $\Delta_\rho[a,b]$ the unique irreducible subrepresentation of 
\[
    \rho\nu^a\times\cdots\times\rho\nu^b,
\]
and $Z_\rho[b,a]$ the unique irreducible quotient of the same induced representation. Let $\pi\in\calR(G(W))$. The $\Delta_\rho[0,-1]$-derivative and $Z_\rho[0,1]$-derivative are defined similar to the original derivative: let $P$ be the maximal parabolic subgroup of $G(W)$ with Levi component $\GL_{2dk}\times G(W_0)$ (or $\wt{\GL}_{2dk}\times_{\mu_2} G(W_0)$ if $G(W)$ is a metaplectic group), where $W_0\subset W$ is a non-degenerate subspace of proper dimension such that $W/W_0$ is split. If the semi-simplified Jacquet module 
\[
    s.s.\Jac_P\pi = \sum_i\tau_i\boxtimes\sigma_i \quad (\textit{or, } s.s.\Jac_P\pi = \sum_i\tau_i\chi_{\psi_F}\boxtimes\sigma_i \textit{ if $G(W)$ is metaplectic}~)
\]
for some irreducible representations $\tau_i$ and $\sigma_i$ of $\GL_{2dk}$ and $G(W_0)$, then we define
\[
    D_{\Delta_\rho[0,-1]}^{(k)}\pi = \sum_{\{i:\tau_i\simeq \Delta_\rho[0,-1]^k\}} \sigma_i ,\quad\textit{and} \quad D_{Z_\rho[0,1]}^{(k)}\pi = \sum_{\{i':\tau_{i'}\simeq Z_\rho[0,1]^k\}} \sigma_{i'}.
\]
We can also define the notion of highest derivative and the property of being reduced with respect to these new derivatives. The analogue of Theorem \ref{T:DerNonSelfDual} still holds (see \cite[Lem. 3.5, Prop. 3.7]{MR4549708}).

\begin{Thm}\label{T:DerSelfDual}
Let $\pi$ be an irreducible smooth representation of $G(W)$. Suppose that $\pi$ is $\rho\nu^{-1}$-reduced. (respectively $\rho\nu^1$-reduced).
\begin{enumerate}
\item The highest $\Delta_\rho[0,-1]$-derivative $D_{\Delta_\rho[0,-1]}^{(k)}\pi$ (respectively the highest ${Z_\rho[0,1]}$-derivative $D_{Z_\rho[0,1]}^{(k)}\pi$) of $\pi$ is irreducible, $\rho\nu^{-1}$-reduced (respectively $\rho\nu^1$-reduced), and
\[
    \pi\xhookrightarrow{\quad}{\Delta_\rho[0,-1]}^k\rtimes D_{\Delta_\rho[0,-1]}^{(k)}\pi \quad (\textit{respectively}\quad\pi\xhookrightarrow{\quad}{Z_\rho[0,1]}^k\rtimes D_{Z_\rho[0,1]}^{(k)}\pi).
\]

\item For any positive integer $k$, the induced representation ${\Delta_\rho[0,-1]}^k\rtimes\pi$ (respectively ${Z_\rho[0,1]}^k\rtimes\pi$) is SI.
\end{enumerate}
\end{Thm}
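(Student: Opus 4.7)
The plan is to follow the Atobe--Mínguez adaptation of the Bernstein--Zelevinsky/Mœglin--Waldspurger derivative machinery to the classical group setting, with Tadić's formula (the geometric lemma) as the central computational tool. The structural fact that drives everything is that the Jacquet module of $\Delta_\rho[0,-1]$ along the maximal parabolic of $\GL_{2d}$ consists of the single term $\rho\nu^{-1}\boxtimes\rho$. Consequently, when one expands $\Jac_P(\Delta_\rho[0,-1]^k\rtimes\sigma)$ via Tadić's formula, each segment $\Delta_\rho[0,-1]$ can either stay intact on the $\GL$-side, be split with its $\rho\nu^{-1}$-component crossed to the $G(W_0)$-side, or be entirely consumed on the classical group side; the hypothesis that $\pi$ (and inductively $\sigma$) is $\rho\nu^{-1}$-reduced annihilates exactly the crossed contributions, which is the key simplification absent in the general conjugate self-dual situation.

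For part (1), I would fix an irreducible summand $\sigma$ of the highest derivative $D_{\Delta_\rho[0,-1]}^{(k)}\pi$. By definition $\Delta_\rho[0,-1]^k\boxtimes\sigma$ appears in $s.s.\Jac_P\pi$; the maximality of $k$ together with $\rho\nu^{-1}$-reducedness forces this term into the cosocle of $\Jac_P\pi$, so Frobenius reciprocity yields the embedding $\pi\hookrightarrow\Delta_\rho[0,-1]^k\rtimes\sigma$. To see that $\sigma$ is unique, irreducible, and $\rho\nu^{-1}$-reduced, I would reverse the direction and compute $\Jac_P(\Delta_\rho[0,-1]^k\rtimes\sigma)$ via Tadić's formula: the $\rho\nu^{-1}$-reducedness kills the crossed contributions and pins the $\Delta_\rho[0,-1]^k$-part of the Jacquet module down to precisely $\Delta_\rho[0,-1]^k\boxtimes\sigma$, which both characterises $\sigma$ uniquely and forbids a $\rho\nu^{-1}$-constituent of $\sigma$ (which would produce a $\Delta_\rho[0,-1]^{k+1}$-term in $\Jac_P\pi$, contradicting the maximality of $k$).

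For part (2), I would argue by induction on $k$. The base case $k=1$ amounts to showing that $\soc(\Delta_\rho[0,-1]\rtimes\pi)$ is irreducible and appears with multiplicity one in the semi-simplification; this is extracted from the Tadić computation of $\Jac_P(\Delta_\rho[0,-1]\rtimes\pi)$, which collapses under $\rho\nu^{-1}$-reducedness, combined with Frobenius reciprocity. The inductive step uses the factorisation $\Delta_\rho[0,-1]^{k+1}\rtimes\pi\simeq\Delta_\rho[0,-1]\rtimes(\Delta_\rho[0,-1]^k\rtimes\pi)$ together with the inductive hypothesis and part (1) applied to the socle of $\Delta_\rho[0,-1]^k\rtimes\pi$. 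The $Z_\rho[0,1]$-case is handled by a symmetric argument, or is reduced to the $\Delta_\rho[0,-1]$-case via contragredient duality.

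The main technical obstacle is the bookkeeping in Tadić's formula for $\Delta_\rho[0,-1]^k\rtimes\sigma$: there are many ways to split the $k$ segments and cross halves across the parabolic, producing many a priori contributions. The crucial (and nontrivial) point is to verify that the $\rho\nu^{-1}$-reducedness really does annihilate everything except the expected term, and to track multiplicities precisely enough to extract the socle-irreducibility and the single-quotient statement.
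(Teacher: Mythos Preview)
The paper does not give its own proof of this theorem; it simply cites \cite[Lem.~3.5, Prop.~3.7]{MR4549708} (Atobe--M\'inguez), with the remark that the arguments there extend to all classical groups considered, including the metaplectic case via \cite{MR2564837}. Your proposal is essentially a sketch of the Atobe--M\'inguez argument---Tadi\'c's structure formula for Jacquet modules of induced representations, with the $\rho\nu^{-1}$-reducedness hypothesis killing the crossed terms---so it is aligned with the intended proof and there is nothing to compare.
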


\vskip 5pt

We will also need the following easy corollary of \cite[Prop. 3.8]{MR4549708}, which gives a sufficient condition for the existence of $\Delta_\rho[0,-1]$-derivative.

\begin{Prop}\label{P:AM-Prop3.8}
Let $\pi$ be an irreducible smooth representation of $G(W)$. Suppose that:
\begin{itemize}
\item $\pi$ is non-tempered, and
\vskip 5pt

\item $\pi$ does not have any non conjugate self-dual derivative.
\end{itemize}
\vskip 5pt
Then there exists a \textit{conjugate self-dual} $\rho\in\calC_{unit}(\GL_{d})$, such that the highest $\Delta_{\rho}[0,-1]$-derivative $\pi_0=D_{\Delta_{\rho}[0,-1]}^{(k)}\pi$ is non-zero with $k\geq 1$. 
\end{Prop}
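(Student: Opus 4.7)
The strategy is to invoke the structural trichotomy from \cite[Prop. 3.8]{MR4549708} for derivatives of an irreducible non-supercuspidal representation, and then to use non-temperedness to narrow down to the $\Delta_\rho[0,-1]$-derivative case.

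Since $\pi$ is non-tempered it is in particular non-supercuspidal, so some non-trivial derivative of $\pi$ must exist. Atobe-Minguez's structural result provides at least one non-zero derivative of $\pi$ of one of three types: (i) an ordinary $\rho$-derivative $D_\rho^{(k)}\pi$ for some supercuspidal $\rho$; (ii) a derivative $D_{\Delta_\rho[0,-1]}^{(k)}\pi$ for some conjugate self-dual $\rho\in\calC_{unit}(\GL_d)$; or (iii) a derivative $D_{Z_\rho[0,1]}^{(k)}\pi$ for some conjugate self-dual $\rho$. The hypothesis that $\pi$ has no non-CSD derivative forces the supercuspidal $\rho$ in (i) to be conjugate self-dual, and case (ii) is exactly the conclusion we seek. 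It remains to eliminate the possibility that only cases (i) and (iii) occur.

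The $Z_\rho[0,1]$-case is excluded by non-temperedness: by Theorem~\ref{T:DerSelfDual}, the existence of such a derivative would embed $\pi$ into $Z_\rho[0,1]^k\rtimes\pi_0$ with $\pi_0$ still $\rho\nu^1$-reduced, and iterating this reduction (together with the analogous non-CSD-derivative vanishing, inherited from $\pi$) must eventually terminate in a tempered representation. Reading backward via the Langlands classification in the subrepresentation convention then forces $\pi$ itself to be tempered, a contradiction. For the conjugate self-dual $\rho$-derivative case, we take the highest such derivative $\pi'=D_\rho^{(k)}\pi$; by Theorem~\ref{T:DerSelfDual} this is irreducible, and $\pi\hookrightarrow\rho^k\rtimes\pi'$. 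One verifies, using the Jacquet module and its exactness together with the embedding, that $\pi'$ remains non-tempered and still has no non-CSD derivative. Induction on the rank of $G(W)$ then yields a $\Delta_\rho[0,-1]$-derivative of $\pi'$, and Frobenius reciprocity applied to $\rho^k\rtimes\pi'$ lifts this to the desired $\Delta_\rho[0,-1]$-derivative of $\pi$.

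The main obstacle is verifying the preservation of the two hypotheses---non-temperedness and vanishing of non-CSD derivatives---under the reduction by highest conjugate self-dual $\rho$-derivatives. This requires careful bookkeeping of Jacquet modules (in particular tracking how Langlands data of $\pi$ descend to those of $\pi'$), but is tractable with the standard machinery recalled in this section. Once this preservation is established the whole argument reduces to a clean induction plus the Atobe-Minguez trichotomy.
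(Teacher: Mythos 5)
The paper does not actually reprove this statement: it is quoted as an immediate corollary of \cite[Prop.~3.8]{MR4549708}, whose content (in the form used here) is a temperedness criterion --- an irreducible representation that is $\rho$-reduced for every \emph{non} conjugate self-dual supercuspidal $\rho$ and $\Delta_\rho[0,-1]$-reduced for every conjugate self-dual $\rho\in\calC_{unit}(\GL_d)$ must be tempered. The proposition is literally the contrapositive of that criterion, so the intended proof is one line. Your proposal instead starts from a different reading of the Atobe--M\'inguez result, as a ``trichotomy'' saying that some derivative of one of three kinds is non-zero; stated that way it is essentially trivial (every non-supercuspidal irreducible representation has some non-zero ordinary derivative) and it no longer carries the link between derivative vanishing and temperedness, which is exactly the content the proof needs. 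So you end up having to re-derive that content by hand, and the two key steps you supply for this do not work.

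Concretely: first, your elimination of the $Z_\rho[0,1]$ case is backwards. If $\pi\hookrightarrow Z_\rho[0,1]^k\rtimes\pi_0$ with $\pi_0$ tempered, then by Frobenius reciprocity the Jacquet module of $\pi$ has a quotient whose central exponent on the $\GL$-factor is strictly positive, so Casselman's criterion shows $\pi$ is \emph{not} tempered; no contradiction with non-temperedness arises, and the case is not excluded by your argument (nor does it need to be: the conclusion only asks for \emph{some} non-zero $\Delta_\rho[0,-1]$-derivative, which the cited criterion produces directly). Second, in the conjugate self-dual ordinary-derivative case you invoke Theorem~\ref{T:DerSelfDual} to assert that the highest $\rho$-derivative $\pi'=D_\rho^{(k)}\pi$ is irreducible; that theorem concerns only the $\Delta_\rho[0,-1]$- and $Z_\rho[0,1]$-derivatives, and irreducibility of highest ordinary derivatives genuinely fails for conjugate self-dual $\rho$ --- this failure is precisely why Atobe--M\'inguez introduced the modified derivatives. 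In addition, the ``preservation of hypotheses'' in your induction (that $\pi'$ stays non-tempered and keeps having no non conjugate self-dual derivatives) is only asserted; because $D_{\rho\nu^{\pm1}}$ does not commute cleanly with $D_\rho^{(k)}$, this is not routine bookkeeping. The correct and short route is simply to quote the temperedness criterion of \cite[Prop.~3.8]{MR4549708} and take its contrapositive, as the paper does.
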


\vskip 5pt

\begin{Rmk}
Although Atobe-M\'inguez only defined these derivatives for symplectic group and split odd special orthogonal groups, these two results also holds for other classical groups (including metaplectic groups, thanks to the work \cite{MR2564837}); their proofs still work with very minor modifications. 
\end{Rmk}

\vskip 10pt

\section{Factors of a representation}

In this section we single out a subcategory 
\[
    \calR^{\dagger}_V(G(W))
\]
of $\calR(G(W))$. The key property of this subcategory we shall make use of is that $\calR^{\dagger}_V(G(W))$ is ``stable'' under the parabolic induction. Firstly we recall the following Lemma-Definition due to M{\oe}glin-Tadi\'c \cite[Lem. 3.3]{MR1896238}.

\begin{Def}\label{D:Tadic}
Let $\pi$ be an irreducible representation of $G(W)$. The following three sets of irreducible supercuspidal representation $\tau$ of general linear groups coincide.

\begin{enumerate}
\item The set of all $\tau$ for which there exists an irreducible subquotient
\[
    \sigma\boxtimes\pi_{cusp} \quad (\textit{or, }\sigma\chi_{\psi_F}\boxtimes\pi_{cusp} \textit{ if $G(W)$ is metaplectic}~)
\]
of a Jacquet module of $\pi$ (with respect to a standard maximal parabolic subgroup), where $\sigma$ is a representation of a general linear group and $\pi_{cusp}$ is a supercuspidal representation of an appropriate classical group, such that $\tau$ is in the cuspidal support of $\sigma$.

\vskip 5pt

\item The set of all $\tau$ for which there exists an irreducible supercuspidal subquotient
\[
    \rho_1\boxtimes\cdots\boxtimes\rho_r\boxtimes\pi_{cusp} \quad (\textit{or, }\rho_1\chi_{\psi_F}\boxtimes\cdots\boxtimes\rho_r\chi_{\psi_F}\boxtimes\pi_{cusp} \textit{ if $G(W)$ is metaplectic}~)
\]
of a Jacquet module of $\pi$ (with respect to a standard parabolic subgroup), where $\rho_i$ are supercuspidal representations of general linear groups and $\pi_{cusp}$ is a representation of an appropriate classical group, such that $\tau\simeq\rho_i$ for some index $i\in\{1,\cdots,r\}$.

\vskip 5pt

\item The set of all $\tau$ for which there exists irreducible supercuspidal representations $\rho_1,\cdots,\rho_r$ of general linear groups and $\pi_0$ irreducible representation of an appropriate classical group, such that
\[
    \pi\xhookrightarrow{\quad} \rho_1\times\cdots\times\rho_r\rtimes \pi_0,
\]
and $\tau\simeq \rho_i$ for some index $i\in\{1,\cdots,r\}$.
\end{enumerate}
\vskip 5pt
Irreducible supercuspidal representations of general linear groups characterized by one of these descriptions will be called \textit{factors} of $\pi$, and we shall denote this set by $\calF(\pi)$.

\end{Def}

\vskip 5pt

The following property of the set of factors is worth noting. Suppose that $\pi$ is an irreducible subquotient of a representation of the form
\begin{equation*}
    \rho_1\times\cdots\times \rho_r\rtimes\pi_{cusp},
\end{equation*}
where $\rho_i\in\calC(\GL_{d_i})$ and $\pi_{cusp}$ is an irreducible supercuspidal representation of $G(W_c)$ for some non-degenerate subspace $W_c\subset W$ of proper codimension such that $W/W_c$ is split. Then we have
\[
    \calF(\pi)\subset\left\{\rho_1,\left(\rho_1^c\right)^\vee,\cdots,\rho_r,\left(\rho_r^c\right)^\vee\right\}.
\]
Moreover, for any index $i\in\{1,\cdots,r\}$, at least one element of $\left\{\rho_i,\left(\rho_i^c\right)^\vee\right\}$ is in the set $\calF(\pi)$. Here for an irreducible representation $\rho$ of $\GL_d=\GL_d(E)$, we use $\rho^c$ to denote the $c$-conjugation of $\rho$. Readers may consult \cite[Sect. 2]{MR3711838} for more details.

\vskip 5pt

\begin{Def} 
We set $\calR^{\dagger}_V(G(W))$ to be the full subcategory of $\calR(G(W))$ consisting of all finite length representation $\Pi$ of $G(W)$, such that for any irreducible subquotient $\pi$ of $\Pi$ we have
\begin{equation*}
    \chi_V\nu^{\frac{1-\ell}{2}}, \chi_V\nu^{-\frac{1-\ell}{2}}\not\in\calF(\pi).
\end{equation*}
\end{Def}

\vskip 5pt

As suggested by the notation, this subcategory is not intrinsic and depends on the space $V$. 
From the definition and the property of $\calF(\pi)$ we can easily deduce the following result.

\begin{Lem}\label{L:BoundExpo}
Consider the parabolically induced representation of $G(W)$:
\[
    \Pi=\rho_1\times\cdots\times \rho_r\rtimes \pi_0,
\]
where $\rho_i\in\calC(\GL_{d_i})$ and $\pi_0$ is an irreducible smooth representation of $G(W_0)$ for some non-degenerate subspace $W_0\subset W$ of proper codimension such that $W/W_0$ is split. Let $V_0\subset V$ be a non-degenerate subspace of the same codimension as $W_0\subset W$ such that $V/V_0$ is split. Then the following are equivalent.
\begin{enumerate}
\item There is an irreducible subquotient $\pi$ of $\Pi$, such that $\pi\in\calR^\dagger_V(G(W))$.

\vskip 5pt

\item For any irreducible subquotient $\pi$ of $\Pi$, we have $\pi\in\calR^\dagger_V(G(W))$.

\vskip 5pt

\item We have 
\[
    \chi_V\nu^{\frac{1-\ell}{2}}, \chi_V\nu^{-\frac{1-\ell}{2}}\not\in \left\{\rho_1,\left(\rho_1^c\right)^\vee,\cdots,\rho_r,\left(\rho_r^c\right)^\vee\right\},
\]
and $\pi_0\in\calR^{\dagger}_{V_0}(G(W_0))$. 
\end{enumerate}

\end{Lem}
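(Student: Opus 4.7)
The implication $(2)\Rightarrow(1)$ is trivial; the content is in $(3)\Rightarrow(2)$ and $(1)\Rightarrow(3)$. My plan is to reduce both to the pairwise property of factors recalled just after Definition~\ref{D:Tadic}, which applies only when the classical-group piece in a parabolic induction is supercuspidal. To set up, I would fix any embedding
\[
\pi_0\hookrightarrow \eta_1\times\cdots\times\eta_s\rtimes\pi_{cusp}
\]
with $\pi_{cusp}$ supercuspidal of some $G(W_c)$ and $\eta_j\in\calC$ (taking $s=0$, $\pi_{cusp}=\pi_0$ if $\pi_0$ is already supercuspidal). By exactness of parabolic induction, $\Pi$ then embeds into $\rho_1\times\cdots\times\rho_r\times\eta_1\times\cdots\times\eta_s\rtimes\pi_{cusp}$, so for any irreducible subquotient $\pi$ of $\Pi$ the recalled property yields
\[
\calF(\pi)\subset\{\rho_i,(\rho_i^c)^\vee\}_i\cup\{\eta_j,(\eta_j^c)^\vee\}_j,
\]
with each pair on the right meeting $\calF(\pi)$. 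Applied directly to $\pi_0$ as a subquotient of $\eta_1\times\cdots\rtimes\pi_{cusp}$, the same property gives $\calF(\pi_0)\subset\{\eta_j,(\eta_j^c)^\vee\}_j$.

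Next I would record two bookkeeping points. Since $V/V_0$ and $W/W_0$ are split of equal codimension, $\chi_{V_0}=\chi_V$ and $\ell_0=\ell$, so the exceptional set $\calE:=\{\chi_V\nu^{(1-\ell)/2},\chi_V\nu^{-(1-\ell)/2}\}$ is the same for both $V$ and $V_0$. Because $\chi_V$ is conjugate self-dual (from the conditions on the splitting characters), $\calE$ is stable under $\tau\mapsto(\tau^c)^\vee$; hence any pair $\{\tau,(\tau^c)^\vee\}$ is either disjoint from or contained in $\calE$. This promotes the ``at least one'' property into the clean fact that $\{\tau,(\tau^c)^\vee\}\cap\calE\ne\emptyset$ forces the enclosing $\calF(-)$ to meet $\calE$.

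With the setup in hand, both implications become formal. For $(3)\Rightarrow(2)$: if $\calF(\pi)\cap\calE\ne\emptyset$ for some subquotient $\pi$, the offending element lies in some $\{\rho_i,(\rho_i^c)^\vee\}$ (forcing $\rho_i\in\calE$, contradicting the first half of (3)) or in some $\{\eta_j,(\eta_j^c)^\vee\}$ (forcing $\calF(\pi_0)$ to meet $\calE$ by the pairwise property for $\pi_0$, contradicting $\pi_0\in\calR^\dagger_{V_0}(G(W_0))$). For $(1)\Rightarrow(3)$: fix $\pi\in\calR^\dagger_V$ an irreducible subquotient of $\Pi$. If some $\rho_i\in\calE$, the pairwise property for $\pi$ immediately gives $\calF(\pi)\cap\calE\ne\emptyset$, a contradiction. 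If $\calF(\pi_0)\cap\calE\ne\emptyset$, the containment $\calF(\pi_0)\subset\{\eta_j,(\eta_j^c)^\vee\}_j$ puts some $\{\eta_j,(\eta_j^c)^\vee\}\subset\calE$, whence the pairwise property for $\pi$ again forces $\calF(\pi)\cap\calE\ne\emptyset$.

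The argument is essentially formal once the recalled facts about $\calF(-)$ are available. The only substantive ingredient is the stability of $\calE$ under $(\cdot^c)^\vee$, which upgrades the ``at least one'' property to a clean pairwise-intersection statement; I do not anticipate any genuine obstacle beyond this bookkeeping.
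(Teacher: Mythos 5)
Your proof is correct and follows essentially the same route as the paper: realize every irreducible subquotient of $\Pi$ inside a full induction from cuspidal data, then apply the containment and ``at least one of each pair'' properties of $\calF(-)$ recalled after Definition~\ref{D:Tadic}. The only difference is that you make explicit the bookkeeping (conjugate self-duality of $\chi_V$, hence stability of the exceptional set under $\tau\mapsto(\tau^c)^\vee$, and the identities $\chi_{V_0}=\chi_V$, $\ell_0=\ell$) that the paper uses implicitly, which is a welcome clarification but not a different argument.
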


\vskip 5pt

\begin{proof}
Trivially $(2)\Rightarrow(1)$. We first show that $(3)\Rightarrow(2)$. Suppose that $\pi_0$ is an irreducible subquotient of 
\begin{equation*}\label{E:4.1}\tag{4.1}
    \tau_1\times \cdots \times \tau_s\rtimes \pi_{cusp},
\end{equation*}
where $\tau_i\in\calC(\GL_{d_i})$ and $\pi_{cusp}$ is an irreducible supercuspidal representation of $G(W_c)$ for some non-degenerate subspace $W_c\subset W_0$ of proper codimension such that $W_0/W_c$ is split. Then any irreducible subquotient $\pi$ of $\Pi$ is also an irreducible subquotient of 
\begin{equation*}\label{E:4.2}\tag{4.2}
    \rho_1\times\cdots\times \rho_r\times\tau_1\times\cdots\times \tau_s\rtimes\pi_{cusp}.
\end{equation*}
As we have explicated, the set $\calF(\pi)$ is contained in 
\[
    \left\{\rho_1,\left(\rho_1^c\right)^\vee,\cdots,\rho_r,\left(\rho_r^c\right)^\vee\right\}\cup\left\{\tau_1,\left(\tau_1^c\right)^\vee,\cdots,\tau_s,\left(\tau_s^c\right)^\vee\right\}.
\]
Hence to show $(2)$ it suffices to show that $\chi_V\nu^{\frac{1-\ell}{2}}$ and $\chi_V\nu^{-\frac{1-\ell}{2}}$ are not contained in the above set. On the one hand, in $(3)$ we have already assumed that
\[
    \chi_V\nu^{\frac{1-\ell}{2}}, \chi_V\nu^{-\frac{1-\ell}{2}}\not\in \left\{\rho_1,\left(\rho_1^c\right)^\vee,\cdots,\rho_r,\left(\rho_r^c\right)^\vee\right\}.
\]
On the other hand, it follows from $\pi_0\in\calR^\dagger_{V_0}(G(W_0))$ that
\[
    \chi_V\nu^{\frac{1-\ell}{2}}, \chi_V\nu^{-\frac{1-\ell}{2}}\not\in \left\{\tau_1,\left(\tau_1^c\right)^\vee,\cdots,\tau_s,\left(\tau_s^c\right)^\vee\right\}.
\]
These together imply that $\chi_V\nu^{\frac{1-\ell}{2}}$ and $\chi_V\nu^{-\frac{1-\ell}{2}}$ are not contained in $\calF(\pi)$, or equivalently $\pi\in\calR^\dagger_V(G(W))$.\\

Next we prove that $(1)\Rightarrow(3)$. Again, suppose that $\pi_0$ is an irreducible subquotient of the induced representation as in (\ref{E:4.1}). Then $\pi$ is an irreducible subquotient of the induced representation as in (\ref{E:4.2}). 
By the property of $\calF(\pi)$, we deduce from $\pi\in\calR^\dagger_V(G(W))$ that
\[
\chi_V\nu^{\frac{1-\ell}{2}}, \chi_V\nu^{-\frac{1-\ell}{2}}\not\in \left\{\rho_1,\left(\rho_1^c\right)^\vee,\cdots,\rho_r,\left(\rho_r^c\right)^\vee\right\}\cup\left\{\tau_1,\left(\tau_1^c\right)^\vee,\cdots,\tau_s,\left(\tau_s^c\right)^\vee\right\}.
\]
This implies the statements in $(3)$.

\end{proof}

\vskip 5pt

\begin{Rmk}
Combining this lemma with the persistience principle \cite[Prop. 4.1]{kudla1996notes}, one can show that for any irreducible smooth representation $\pi\in\calR^\dagger_V(G(W))$, either: 
\begin{itemize}
\item the big theta lift $\Theta_{V,W}(\pi)=0$; or

\vskip 5pt

\item $\pi$ is the first occurence of $\theta_{V,W}(\pi)$ in the Witt tower containing $W$.
\end{itemize}
\end{Rmk}

\vskip 10pt

\section{Ext-theta vanishing for representations without critical factors}

As we have explicated in the introduction, the contragredient of the big theta lift can be intepreted functorially by considering the functor
\[
    \Hom_{G(W)}\left(\Omega_{V,W},-\right)_{sm}: \mathsf{R}\left(G(W)\right) \lra \mathsf{R}\left(H(V)\right).
\]
Our goal in this section is to show that the restriction of this functor $\Hom_{G(W)}\left(\Omega_{V,W},-\right)_{sm}$ to the subcategory $\calR^{\dagger}_V(G(W))$ is exact. To be more precise, we consider the derived functors 
\[
    \Ext^i_{G(W)}\left(\Omega_{V,W},-\right)_{sm}: \mathsf{R}\left(G(W)\right) \lra \mathsf{R}\left(H(V)\right)
\]
of $\Hom_{G(W)}\left(\Omega_{V,W},-\right)_{sm}$ and prove the following Ext-vanishing result.

\begin{Prop}\label{P:Ext-Vanish}
Suppose that $\pi\in \calR^{\dagger}_V(G(W))$. Then we have
\[
    \Ext_{G(W)}^i\left(\Omega_{V,W},\pi\right)_{sm} = 0 \quad \textit{for all $i>0$}.
\]
\end{Prop}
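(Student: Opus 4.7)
The plan is to prove the $\Ext$-vanishing by induction on $\dim W$, combining iterated long exact sequences of $\Ext$, embeddings of non-supercuspidal irreducibles into parabolic inductions afforded by the theory of derivatives (Theorems \ref{T:DerNonSelfDual} and \ref{T:DerSelfDual}), Bernstein's second adjointness, and Kudla's filtration of the Jacquet modules of the Weil representation.

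First I would reduce to irreducible $\pi$. Since $\calR^\dagger_V(G(W))$ is closed under subquotients (Lemma \ref{L:BoundExpo}), a finite-length object has a composition series whose factors still lie in the category, and a long exact sequence of $\Ext$ reduces the statement to the case of irreducible $\pi$. The base case is $\dim W = 0$ (trivial) together with $\pi$ supercuspidal: the $\Ext$-groups then decouple into the Bernstein block of $\pi$, which has a very simple structure (essentially modules over a Laurent polynomial ring), and $\Omega_{V,W}$ contributes a projective object via a compact-induction argument.

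For the inductive step, let $\pi$ be irreducible and non-supercuspidal in $\calR^\dagger_V(G(W))$. Choose an embedding $\pi \hookrightarrow \rho \rtimes \sigma$ with $\rho \in \calC(\GL_d)$ and $\sigma$ irreducible of $G(W_0)$, where $W_0 \subsetneq W$ is non-degenerate with $W/W_0$ split. By Lemma \ref{L:BoundExpo}, $\sigma \in \calR^\dagger_{V_0}(G(W_0))$, and neither $\rho$ nor $(\rho^c)^\vee$ lies in $\{\chi_V\nu^{\frac{1-\ell}{2}}, \chi_V\nu^{-\frac{1-\ell}{2}}\}$. Combining the long exact sequence of $\Ext$ for $0 \to \pi \to \rho \rtimes \sigma \to Q \to 0$ with a secondary induction on the Jordan--H\"older length of $\rho \rtimes \sigma$ (all of whose composition factors lie in $\calR^\dagger_V(G(W))$), the task reduces to proving
\[
    \Ext^i_{G(W)}\bigl(\Omega_{V,W},\, \rho \rtimes \sigma\bigr)_{sm} = 0 \quad\text{for all } i > 0.
\]
By Bernstein's second adjointness (in its derived form) this equals $\Ext^i_L\bigl(\Jac_{\overline{P}}\Omega_{V,W},\, \rho \boxtimes \sigma\bigr)_{sm}$ with $L \simeq \GL_d \times G(W_0)$. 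Kudla's filtration of $\Jac_{\overline{P}}\Omega_{V,W}$ has graded pieces parabolically induced from $\chi_V\nu^{\alpha_j}$-twisted tensor products with the smaller Weil representations $\Omega_{V_j, W_0}$, where $V_j \subset V$ has codimension $2j$ for $j = 0, 1, \ldots, \min(d, \dim V)$. Applying second adjointness once more on the $\GL_d$-factor, each graded piece either contributes zero (when $\rho$ is absent from its $\GL$-cuspidal support) or yields an $\Ext$-group on $G(W_0)$ involving $\Omega_{V_j, W_0}$ and a twist of $\sigma$. The constraint on $\rho$ eliminates precisely the graded piece whose $\GL$-cuspidal support is $\{\chi_V\nu^{\pm\frac{1-\ell}{2}}\}$, while the surviving pieces vanish by the induction hypothesis on $G(W_0)$.

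The main obstacle will be the bookkeeping in Kudla's filtration: computing the precise exponents $\alpha_j$, pinpointing the graded piece killed by the condition on $\rho$, and verifying that the twisted version of $\sigma$ appearing in each of the surviving graded pieces still lies in $\calR^\dagger_{V_j}(G(W_0))$ for each relevant $V_j$---which varies with $j$, whereas Lemma \ref{L:BoundExpo} directly controls $\sigma$ only with respect to $V_0$. A strengthened comparison of the $\calR^\dagger_{V_j}$-conditions across different $j$ is likely to be required to close the induction.
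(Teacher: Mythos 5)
Your direct computation of $\Ext^i_{G(W)}(\Omega_{V,W},\rho\rtimes\sigma)_{sm}$ via Kudla's filtration and (second) adjointness is sound in outline --- it is essentially the paper's Lemma \ref{L:KudlaFilCompute}, and under the hypothesis on the cuspidal support only one graded piece of the filtration survives, so the cross-$V_j$ bookkeeping you worry about is already packaged in a single term there. The genuine gap is in your reduction from the irreducible $\pi$ to the induced representation. You embed $\pi\hookrightarrow\rho\rtimes\sigma$ and use the long exact sequence attached to $0\to\pi\to\rho\rtimes\sigma\to Q\to 0$. The relevant segment is
\[
\Ext^{i-1}_{G(W)}\left(\Omega_{V,W},Q\right)_{sm}\lra \Ext^{i}_{G(W)}\left(\Omega_{V,W},\pi\right)_{sm}\lra \Ext^{i}_{G(W)}\left(\Omega_{V,W},\rho\rtimes\sigma\right)_{sm},
\]
so even when the right-hand term vanishes, $\Ext^i(\Omega_{V,W},\pi)_{sm}$ is only exhibited as a quotient of the image of the connecting map from $\Ext^{i-1}(\Omega_{V,W},Q)_{sm}$. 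At $i=1$ that term is $\Hom_{G(W)}(\Omega_{V,W},Q)_{sm}$, i.e.\ (the contragredient of) a big theta lift, which is certainly not zero in general; killing the connecting map amounts to showing that $\Hom(\Omega_{V,W},\rho\rtimes\sigma)_{sm}\to\Hom(\Omega_{V,W},Q)_{sm}$ is surjective, which is precisely the exactness statement the proposition is meant to establish --- the argument is circular at this point. Your secondary induction on the Jordan--H\"older length of $\rho\rtimes\sigma$ does not repair this: the constituents of $Q$ must in turn be embedded into new induced representations of unrelated length, so there is no decreasing invariant, and in any case the induction never addresses the connecting homomorphism.

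The paper avoids this by running the long exact sequence on the other side: it chooses a surjection $\rho\rtimes\pi_0\twoheadrightarrow\pi$ with kernel $K$, so the problematic third term becomes $\Ext^{i+1}_{G(W)}(\Omega_{V,W},K)_{sm}$, one degree higher and with $K$ still in $\calR^{\dagger}_V(G(W))$ by Lemma \ref{L:BoundExpo}. This is then closed by a descending induction on the degree $i$ (Lemma \ref{L:DescentDegree}), anchored by the finite cohomological dimension bound $\Ext^i_{G(W)}(\Omega_{V,W},-)_{sm}=0$ for $i>r_W$, while the first term $\Ext^i(\Omega_{V,W},\rho\rtimes\pi_0)_{sm}$ vanishes by Lemma \ref{L:KudlaFilCompute} together with the induction on the Witt index. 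Your proposal contains no analogue of either ingredient (the degree-shifting use of the kernel, or the vanishing above the Witt index that starts the descent), and without them the reduction to the parabolically induced module cannot be completed. A minor further point: for supercuspidal $\pi$ the vanishing is immediate because such $\pi$ is an injective smooth representation of these groups; no Bernstein-block or projectivity argument for $\Omega_{V,W}$ is needed, and the Laurent-polynomial description you invoke pertains to groups with non-compact center, which is not the situation here.
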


\vskip 5pt

We need the following Ext-version of \cite[Prop. 5.2]{MR3714507}. The proof is totally the same as Atobe-Gan's proof.
Let $\pi$ be a smooth representation of $G(W)$.

\begin{Lem}\label{L:KudlaFilCompute}
Suppose that $\pi\simeq \chi_V\tau\rtimes\pi_0$, where $\tau$ is either:
\begin{itemize}
\item an irreducible supercuspidal $\rho\in\calC(\GL_d)$; or

\vskip 5pt

\item the generalized Steinberg representation $\Delta_\rho[0,-1]$ or $\Delta_\rho[1,0]$ for some conjugate self-dual $\rho\in\calC_{unit}(\GL_d)$; or

\vskip 5pt

\item the generalized Speh representation $Z_\rho[0,1]$ or $Z_\rho[-1,0]$ for some conjugate self-dual $\rho\in\calC_{unit}(\GL_d)$,
\end{itemize}
\vskip 5pt
and $\pi_0$ is a smooth representation of $G(W_0)$ for some non-degenerate subspace $W_0\subset W$ of proper codimension such that $W/W_0$ is split. If $\chi_V\nu^{\frac{1-\ell}{2}}$ and $\chi_V\nu^{-\frac{1-\ell}{2}}$ are not in the cuspidal support of $\chi_V\tau$, then for any $i\geq 0$ we have
\[
    \Ext_{G(W)}^i\left(\Omega_{V,W},\chi_V\tau\rtimes \pi_0\right)_{sm} \simeq \chi_W^c\tau^c\rtimes \Ext_{G(W_0)}^i\left(\Omega_{V_0,W_0},\pi_0\right)_{sm}.
\]
Here $V_0\subset V$ is a non-degenerate subspace of the same codimension as $W_0\subset W$ such that $V/V_0$ is split, and we interpret the RHS to be $0$ if there is no such $V_0$.
\end{Lem}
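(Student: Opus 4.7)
The plan is to follow the argument of Atobe--Gan in \cite[Prop.~5.2]{MR3714507}, upgrading $\Hom$ to $\Ext^i$ throughout; this upgrade goes through because all the relevant parabolic inductions and Jacquet restrictions are exact functors.

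The first step is to move the parabolic induction off the right-hand slot of the $\Ext$ group. Since $\Jac_P$ is exact and is left adjoint to $\Ind_P^{G(W)}$, parabolic induction preserves injectives, and the usual Grothendieck-style argument yields, for every $i\ge 0$,
\[
    \Ext^i_{G(W)}\bigl(\Omega_{V,W},\ \chi_V\tau \rtimes \pi_0\bigr)_{sm}
    \;\simeq\; \Ext^i_{M_P}\bigl(\Jac_P(\Omega_{V,W}),\ \chi_V\tau \boxtimes \pi_0\bigr)_{sm},
\]
where $P$ is the maximal parabolic of $G(W)$ with Levi $M_P \simeq \GL_k \times G(W_0)$, $k$ being $d$ or $2d$ depending on the type of $\tau$.

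The second step is to invoke the Kudla filtration of $\Jac_P(\Omega_{V,W})$: a finite filtration whose successive quotients are parabolically induced representations built from $\GL$-factors involving the characters $\chi_V\nu^{a}$ for explicit exponents $a$, tensored with Weil representations $\Omega_{V_a,W_0}$ for non-degenerate subspaces $V_a\subset V$. Combining the tabulated list of exponents (as computed by Atobe--Gan) with the hypothesis that $\chi_V\nu^{\pm(1-\ell)/2}$ does not appear in the cuspidal support of $\chi_V\tau$, one checks that every layer except one lies in a Bernstein component of $M_P$ disjoint from the Bernstein component of $\chi_V\tau \boxtimes \pi_0$. Since $\Ext$ vanishes in every degree between representations supported on disjoint Bernstein components, the long exact sequences of $\Ext$ attached to the Kudla filtration collapse and only a single ``extremal'' layer contributes.

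Finally, one identifies the surviving layer: by Atobe--Gan's computation (which uses the MVW-type identification that converts $\chi_V$ into $\chi_W^c$ and $\tau$ into $\tau^c$) it is isomorphic to the $M_P$-representation with $\GL_k$-factor $\chi_W^c\tau^c$ and $G(W_0)$-factor $\Omega_{V_0,W_0}$. A K\"unneth-type decomposition then splits the two factors, and Frobenius reciprocity reintroduces the parabolic induction $\chi_W^c\tau^c\rtimes(-)$, producing exactly the right hand side of the lemma. The main obstacle I anticipate is in the second step: one must carefully match the Kudla exponents against the cuspidal-support hypothesis in order to guarantee that exactly one layer survives, uniformly across all five cases of $\tau$ listed in the statement. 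A subsidiary technicality is that $\Omega_{V,W}$ is not of finite length, so the $\Ext$-orthogonality between Bernstein components has to be invoked in its projector-theoretic form (via the Bernstein decomposition of $\mathsf{R}(M_P)$) rather than through a naive central-character argument.
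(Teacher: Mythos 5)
Your overall route is exactly the one the paper intends: the paper gives no independent argument for this lemma but simply asserts that Atobe--Gan's proof of \cite[Prop.~5.2]{MR3714507} goes through verbatim with $\Hom$ replaced by $\Ext^i$, and your three steps (derived Frobenius reciprocity using that $\Ind_P$ preserves injectives, the Kudla filtration of $\Jac_P(\Omega_{V,W})$, elimination of all but one layer via the hypothesis on cuspidal support, then identification of the surviving layer and a K\"unneth argument) reconstruct precisely that argument.

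There is, however, one justification in your second step that is wrong as stated and would fail in cases covered by the lemma: the discarded layers do \emph{not} in general lie in Bernstein components of $M_P$ disjoint from that of $\chi_V\tau\boxtimes\pi_0$. For instance, if $d=1$ and $\tau=\nu^{s}$ with $s\neq\pm\frac{1-\ell}{2}$, the degenerate layer carries the character $\chi_V\nu^{\frac{1-\ell}{2}}$ on its $\GL_1$-block, which lies in the \emph{same} Bernstein component as $\chi_V\nu^{s}$; moreover the layers contain huge factors ($S$-spaces and smaller Weil representations) meeting essentially every component on the $G(W_0)$-side, so component-disjointness cannot be the vanishing mechanism, nor can the ``projector-theoretic'' form of it that you propose as a fix. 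What actually kills the layers is the finer, point-wise statement: after stripping the induction on the $\GL_k$-factor by (derived) second adjointness, the comparison takes place between the degenerate character $\chi_V\nu^{c_a}\circ\det_{k-a}$ and the finite-length Jacquet modules of $\chi_V\tau$, whose cuspidal supports avoid $\chi_V\nu^{\pm\frac{1-\ell}{2}}$ by hypothesis; the Bernstein center then acts by distinct characters on the two sides, forcing $\Ext^i=0$ in all degrees, while the infinite-length factors ($\Omega_{V_a,W_0}$, $\pi_0$, the Schwartz spaces) are isolated in the other tensor slots and play no role in the vanishing. Relatedly, for the surviving layer you should make explicit why no higher $\GL$-Ext appears in your K\"unneth step: its $\GL_k$-part is the regular representation $C_c^\infty(\GL_k)$, which is projective in $\mathsf{R}(\GL_k)$, so only degree $0$ contributes there and the formula of the lemma comes out with $\Ext^i$ concentrated on the $G(W_0)$-factor. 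With these two corrections your sketch is a faithful Ext-upgrade of Atobe--Gan's proof.
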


\vskip 5pt

Now back to the proof of Proposition \ref{P:Ext-Vanish}. We shall argue by induction on the Witt index $r_W$ of $W$. In the case that $r_W=0$, $G(W)$ is compact so there is nothing to prove. \\

Suppose that the proposition has been proved when the Witt index $r_W<r$. Now we prove the case when $r_W=r$. The first thing to note is that for any smooth representation $\pi$ of $G(W)$, one has
\[
    \Ext_{G(W)}^i\left(\Omega_{V,W},\pi\right)_{sm} = 0 \quad \textit{for all $i>r$}.
\]
Indeed this follows from the general result \cite[Pg. 98, Sect. 4.2]{BNote}. Next we appeal to a descending argument.

\begin{Lem}\label{L:DescentDegree}
In the context of above, let $i$ be a positive integer. Suppose that 
\[
    \Ext_{G(W)}^{i+1}(\Omega_{V,W},\pi)_{sm}=0
\]
for any $\pi\in \calR^{\dagger}_V(G(W))$. Then we have
\[
    \Ext_{G(W)}^{i}(\Omega_{V,W},\pi)_{sm}=0
\]
for any $\pi\in \calR^{\dagger}_V(G(W))$. 
\end{Lem}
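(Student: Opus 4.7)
The plan is to reduce to $\pi$ irreducible, embed $\pi$ into a parabolically induced representation of the type handled by Lemma \ref{L:KudlaFilCompute}, and run the long exact sequence combining the standing hypothesis on $\Ext^{i+1}$ with the outer induction hypothesis of Proposition \ref{P:Ext-Vanish} (which gives $\Ext^{\ge 1}$-vanishing for all classical groups of strictly smaller Witt index). First, since $\calR^{\dagger}_V(G(W))$ is closed under irreducible subquotients by definition, a standard composition-series devissage using the long exact $\Ext$-sequence reduces the claim to the case where $\pi\in\calR^{\dagger}_V(G(W))$ is irreducible.

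Assume now that $\pi$ is irreducible and non-supercuspidal. Casselman's sub-representation theorem furnishes an embedding
\[
    \pi\hookrightarrow \chi_V\tau\rtimes \pi_0,\qquad \tau := \chi_V^{-1}\rho,
\]
where $\rho\in\calC(\GL_d)$ is a supercuspidal factor of $\pi$ in the sense of Definition \ref{D:Tadic} and $\pi_0$ is a finite-length representation of $G(W_0)$ for some non-degenerate $W_0\subsetneq W$ with $W/W_0$ split; in particular the Witt index of $W_0$ is strictly less than $r$. Since $\rho\in\calF(\pi)$ and $\pi\in\calR^{\dagger}_V(G(W))$, one has $\rho\neq \chi_V\nu^{\pm\frac{1-\ell}{2}}$. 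The property of factors recalled after Definition \ref{D:Tadic}, together with the stability of the set $\{\chi_V\nu^{\pm\frac{1-\ell}{2}}\}$ under the involution $\rho'\mapsto (\rho'^c)^\vee$, then allows one to conclude, via Lemma \ref{L:BoundExpo}, that $\pi_0\in\calR^{\dagger}_{V_0}(G(W_0))$ and that every irreducible subquotient of $\chi_V\tau\rtimes \pi_0$ lies in $\calR^{\dagger}_V(G(W))$; in particular the cokernel $Q$ of this embedding lies in $\calR^{\dagger}_V(G(W))$.

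The cuspidal support $\{\rho\}$ of $\chi_V\tau=\rho$ avoids the critical characters, so Lemma \ref{L:KudlaFilCompute} applies to the middle term of the short exact sequence $0\to\pi\to \chi_V\tau\rtimes\pi_0\to Q\to 0$. The associated long exact $\Ext$-sequence then reads
\[
    \chi_W^c\tau^c\rtimes \Ext^i_{G(W_0)}(\Omega_{V_0,W_0},\pi_0)_{sm}\lra \Ext^i_{G(W)}(\Omega_{V,W},\pi)_{sm}\lra \Ext^{i+1}_{G(W)}(\Omega_{V,W},Q)_{sm}.
\]
The rightmost term vanishes by the standing hypothesis of the lemma, and the leftmost vanishes for $i>0$ by the outer induction hypothesis on Witt index applied to $G(W_0)$ and $\pi_0\in\calR^{\dagger}_{V_0}(G(W_0))$. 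Combining these vanishings yields $\Ext^i_{G(W)}(\Omega_{V,W},\pi)_{sm}=0$, as required.

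The remaining case of irreducible supercuspidal $\pi$ is where the proof becomes most delicate, since the parabolic embedding above is no longer available. The natural approach is a separate analysis via the Bernstein decomposition: one shows that the $[\pi]$-component of $\Omega_{V,W}$ is sufficiently well-behaved (for instance, by combining the compact induction presentation of supercuspidals with the Kudla filtration of the Weil representation) to force $\Ext^j_{G(W)}(\Omega_{V,W},\pi)_{sm}$ to vanish for every $j>0$. This supercuspidal step is the main obstacle I anticipate; the non-supercuspidal argument above is essentially formal given Lemmas \ref{L:KudlaFilCompute} and \ref{L:BoundExpo} and the outer induction.
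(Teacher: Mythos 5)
There is a genuine gap, and it is in the heart of the devissage. You realize $\pi$ as a \emph{sub}representation, $0\to\pi\to \chi_V\tau\rtimes\pi_0\to Q\to 0$, but then you write down the long exact sequence as if $\pi$ were the \emph{quotient}. Since $\Hom_{G(W)}(\Omega_{V,W},-)_{sm}$ is covariant in the second variable, the segment of the long exact sequence containing $\Ext^i(\Omega_{V,W},\pi)_{sm}$ for your short exact sequence is
\[
\Ext^{i-1}_{G(W)}\left(\Omega_{V,W},Q\right)_{sm}\lra \Ext^{i}_{G(W)}\left(\Omega_{V,W},\pi\right)_{sm}\lra \Ext^{i}_{G(W)}\left(\Omega_{V,W},\chi_V\tau\rtimes\pi_0\right)_{sm},
\]
not the sequence you display. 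So knowing that $\Ext^i$ of the induced representation vanishes and that $\Ext^{i+1}(\Omega_{V,W},Q)_{sm}=0$ only shows that $\Ext^i(\Omega_{V,W},\pi)_{sm}$ is a quotient of $\Ext^{i-1}(\Omega_{V,W},Q)_{sm}$, which is a \emph{lower}-degree term about which the descending induction says nothing (for $i=1$ it is a $\Hom$-space, which is typically nonzero). The argument must instead start from an epimorphism $\rho\rtimes\pi_0\twoheadrightarrow\pi$ with kernel $K$ (available because $\pi$ is not supercuspidal, e.g.\ by dualizing the Casselman embedding or by Frobenius reciprocity from an irreducible quotient of a Jacquet module); then $K\in\calR^{\dagger}_V(G(W))$ by Lemma \ref{L:BoundExpo}, and the correct segment $\Ext^i(\rho\rtimes\pi_0)\to\Ext^i(\pi)\to\Ext^{i+1}(K)$ closes the induction, which is exactly what the paper does. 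Your identification of the factor condition via Lemma \ref{L:BoundExpo} and the use of Lemma \ref{L:KudlaFilCompute} on the induced term are fine.

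The second problem is the supercuspidal case, which you leave open and describe as the main obstacle. It is in fact the trivial case: a supercuspidal representation of $G(W)$ (whose center is compact) is an injective object in the category of smooth representations, so $\Ext^{j}_{G(W)}(\Omega_{V,W},\pi)_{sm}=0$ for all $j>0$ with no further work; no Bernstein-component or compact-induction analysis is needed. With these two corrections your outline coincides with the paper's proof, but as written the degree bookkeeping fails and one case is missing.
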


\vskip 5pt

\begin{proof}
It suffices to show the assertion for all irreducible representation $\pi\in \calR^{\dagger}_V(G(W))$. If $\pi$ is supercuspidal, then the assertion holds automatically since $\pi$ is injective. So we can assume that $\pi$ is not supercuspidal. Then there exists some $\rho\in\calC(\GL_{d})$ and an irreducible representation $\pi_0$ of $G(W_0)$ for some non-degenerate subspace $W_0\subset W$ of codimension $2d$, such that one has an epimorphism
\[
    \rho\rtimes\pi_0 \xtwoheadrightarrow{} \pi.
\]
Let $K$ be the kernel of this epimorphism. We complete it to a short exact sequence
\[
    0\lra K \lra \rho\rtimes\pi_0 \longrightarrow \pi\lra 0.
\]
It follows from Lemma \ref{L:BoundExpo} that $\pi_0\in\calR^{\dagger}_{V_0}(G(W_0))$ and $K\in\calR^{\dagger}_V(G(W))$. Applying the functor $\Hom_{G(W)}\left(\Omega_{V,W},-\right)_{sm}$ to this short exact sequence, we obtain from the long exact sequence that
\[
    \Ext^i_{G(W)}\left(\Omega_{V,W},\rho\rtimes\pi_0\right)_{sm} \lra \Ext^i_{G(W)}\left(\Omega_{V,W},\pi\right)_{sm} \lra \Ext^{i+1}_{G(W)}\left(\Omega_{V,W},K\right)_{sm}  
\]
is exact. Now we investigate these Ext spaces. By Lemma \ref{L:KudlaFilCompute} and the induction hypothesis, we have
\[
    \Ext^i_{G(W)}\left(\Omega_{V,W},\rho\rtimes\pi_0\right)_{sm} \simeq \chi_V\chi_W^c\rho^c\rtimes\Ext^i_{G(W_0)}\left(\Omega_{V_0,W_0},\pi_0\right)_{sm} = 0.
\]
On the other hand, by the assumption 
\[
    \Ext^{i+1}_{G(W)}\left(\Omega_{V,W},K\right)_{sm} =0.
\]
Hence we deduce that
\[
    \Ext^i_{G(W)}\left(\Omega_{V,W},\pi\right)_{sm} = 0
\]
as desired. 

\end{proof}

\vskip 5pt

This lemma completes the proof of Proposition \ref{P:Ext-Vanish}.

\vskip 10pt

\section{Irreducibility of the big theta lift}

Our result in the previous section implies that the restriction of the functor 
\[
    \Hom_{G(W)}\left(\Omega_{V,W},-\right)_{sm}: \calR^{\dagger}_V\left(G(W)\right) \lra \calR\left(H(V)\right)
\]
is exact. Now we shall use this to show the following irreducibility result of the big theta lift. 

\begin{Thm}\label{T:Main}
Suppose that $\ell\neq -3$. Then for any irreducible smooth representation $\pi\in\calR^{\dagger}_V(G(W))$, the big theta lift $\Theta_{V,W}(\pi)$ is irreducible if it is non-zero.
\end{Thm}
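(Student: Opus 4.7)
The plan is to proceed by induction on $\dim W$, reducing $\Theta_{V,W}(\pi)$ to a theta lift on a smaller space via the derivative theory of Section 3. The exactness of $\Hom_{G(W)}(\Omega_{V,W},-)_{sm}$ on $\calR^{\dagger}_V(G(W))$ provided by Proposition \ref{P:Ext-Vanish} is the mechanism that allows short exact sequences of $G(W)$-representations to be transferred faithfully to the $H(V)$-side.

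For the base case I would take $\pi$ supercuspidal; the remark following Lemma \ref{L:BoundExpo}, combined with Kudla's classical first-occurrence result, yields $\Theta_{V,W}(\pi)=\theta_{V,W}(\pi)$, hence irreducibility. For the inductive step, Proposition \ref{P:AM-Prop3.8}, supplemented by Atobe--Gan's \cite[Prop.~5.4]{MR3714507} for the tempered case, produces an irreducible supercuspidal $\rho$ and $k\geq 1$ such that $\pi$ embeds into $\tau\rtimes\pi_0$, with $\tau=\rho^k$ (if $\rho$ is non conjugate self-dual, via Theorem \ref{T:DerNonSelfDual}) or $\tau=\Delta_\rho[0,-1]^k$ (if $\rho$ is conjugate self-dual, via Theorem \ref{T:DerSelfDual}), where $\pi_0$ is the highest derivative --- irreducible, and in $\calR^{\dagger}_{V_0}(G(W_0))$ by Lemma \ref{L:BoundExpo}. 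The inductive hypothesis then gives $\Theta_{V_0,W_0}(\pi_0)=\theta_{V_0,W_0}(\pi_0)$ irreducible.

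Form the short exact sequence $0\to\pi\to\tau\rtimes\pi_0\to Q\to 0$ (all three terms are in $\calR^{\dagger}_V$ by Lemma \ref{L:BoundExpo}); applying $\Hom_{G(W)}(\Omega_{V,W},-)_{sm}$ (exact by Proposition \ref{P:Ext-Vanish}) and iterating Lemma \ref{L:KudlaFilCompute} yields
\[
0\to\Theta(\pi)^\vee\to\tau'\rtimes\theta(\pi_0)^\vee\to\Theta(Q)^\vee\to 0,
\]
where $\tau'$ is the theta-transform of $\tau$ obtained by replacing $\chi_V^{-1}\rho$ with $\chi_W^c(\chi_V^{-1}\rho)^c$ in the inducing datum. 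By Theorem \ref{T:DerNonSelfDual}(2) or \ref{T:DerSelfDual}(2), the middle term is SI; hence the subrepresentation $\Theta(\pi)^\vee$ is also SI, and its socle must be the unique irreducible subrepresentation $\theta(\pi)^\vee$ of $\Theta(\pi)^\vee$ provided by Howe duality.

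The main obstacle is to upgrade this SI conclusion to full irreducibility, i.e.\ to rule out a strict containment $\theta(\pi)^\vee\subsetneq\Theta(\pi)^\vee$. My plan is to pin down $\Theta(Q)^\vee$ explicitly: analyze the non-$\pi$ irreducible subquotients of the SI representation $\tau\rtimes\pi_0$, compute their theta lifts using the explicit description of Baki\'c--Hanzer \cite{MR4279104}, and compare composition factors inside $\tau'\rtimes\theta(\pi_0)^\vee$. The hypothesis $\ell\neq -3$ should enter precisely at this matching step: at $\ell=-3$, certain boundary exponents arising from the iterated Kudla filtration coincide with the excluded critical twists $\chi_V\nu^{\pm(1-\ell)/2}$, so that either the hypothesis of Lemma \ref{L:KudlaFilCompute} or the identification of $\Theta(Q)^\vee$ breaks down; this degenerate value is presumably the subject of the separate Theorem \ref{T:Main2}.
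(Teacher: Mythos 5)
Your setup (induction, derivative embeddings $\pi\hookrightarrow\tau\rtimes\pi_0$, exactness from Proposition \ref{P:Ext-Vanish}, SI of the induced representation on the $H(V)$-side) matches the paper's, but the decisive step is missing. You correctly identify the obstacle of ruling out $\theta(\pi)^\vee\subsetneq\Theta(\pi)^\vee$, and your plan --- compute $\Theta(Q)^\vee$ via the explicit formulas of Baki\'c--Hanzer and match composition factors --- is not carried out and is unlikely to work as stated: the irreducible subquotients of $\tau\rtimes\pi_0$ other than $\pi$ live on the \emph{same} space $W$, so your induction gives no control over their big theta lifts to $V$, and nothing guarantees their lifts are irreducible (that is exactly the statement being proved). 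The paper resolves this differently, and this is precisely where Proposition \ref{P:Ext-Vanish} is actually needed: apply the MVW involution and the contragredient to $\pi\hookrightarrow\tau\rtimes\pi_0$ to obtain an \emph{epimorphism} $\tau^{\ast}\rtimes\pi_0\twoheadrightarrow\pi$ (with $\tau^{\ast}=(\rho^c)^\vee$, resp.\ $\Delta_{\rho}[1,0]^k$); applying $\Hom_{G(W)}(\Omega_{V,W},-)_{sm}$, which is exact on $\calR^{\dagger}_V(G(W))$ when applied to the kernel, yields a surjection onto $\Theta_{V,W}(\pi)^\vee$ from an induced representation, and MVW-dualizing once more gives a second embedding $\Theta_{V,W}(\pi)^{MVW}\hookrightarrow \chi_W^c\tau^c\rtimes\Theta_{V_0,W_0}(\pi_0)^\vee$. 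Since $\Theta_{V,W}(\pi)^{MVW}$ has a unique irreducible quotient (Howe duality) and embeds into a representation whose socle has multiplicity one, reducibility of $\Theta_{V,W}(\pi)$ would force the socle to appear twice, a contradiction. In your argument the exactness is only used to name $\Theta(Q)^\vee$, which does not do this work.

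Two further gaps. First, in the conjugate self-dual case Theorem \ref{T:DerSelfDual}(2) gives SI of $\left(\chi_W^c\Delta_{\rho^c}[0,-1]\right)^k\rtimes\Theta_{V_0,W_0}(\pi_0)^\vee$ only if $\Theta_{V_0,W_0}(\pi_0)^\vee$ is $\chi_W^c\rho^c\nu^{-1}$-reduced; you invoke the SI conclusion without verifying this hypothesis. Establishing it is the one place the paper uses $\ell\neq -3$ (by a Kudla-filtration computation in the reverse direction, in the spirit of \cite[Prop.~5.2]{MR3714507}), so your guess that $\ell\neq-3$ enters through the hypotheses of Lemma \ref{L:KudlaFilCompute} or the identification of $\Theta(Q)^\vee$ is off the mark --- those hypotheses are already guaranteed by $\pi\in\calR^{\dagger}_V(G(W))$ via Lemma \ref{L:BoundExpo}. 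Second, the tempered, non-supercuspidal case with no non-conjugate-self-dual derivative is not covered by Proposition \ref{P:AM-Prop3.8} (which assumes $\pi$ non-tempered), and \cite[Prop.~5.4]{MR3714507} does not fill the gap since it carries extra hypotheses on L-parameters. The paper treats this case by a different mechanism: $\pi$ is a direct summand of $\chi_V\rho\rtimes\pi_0$ with $\rho$ conjugate self-dual unitary and $\pi_0$ tempered, and one uses unitarity of $\theta_{V_0,W_0}(\pi_0)$ (matrix-coefficient integrals when $\ell\le 0$, temperedness of the lift when $\ell>0$) to see that $\chi_W^c\rho^c\rtimes\Theta_{V_0,W_0}(\pi_0)^\vee$ is semisimple, whence $\Theta_{V,W}(\pi)^\vee$, having a unique irreducible subrepresentation, is irreducible; an SI argument cannot work there because that induced representation is generally not SI.
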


\vskip 5pt

\begin{proof}
Again we shall prove this theorem by induction on the Witt index $r_W$ of $W$. In the case that $r_W=0$, $G(W)$ is compact so there is nothing to prove. \\

Suppose that the theorem has been proved when the Witt index $r_W<r$. Now we prove the case when $r_W=r$. Let $\pi\in\calR^{\dagger}_V(G(W))$ be an irreducible smooth representation. If $\pi$ is supercuspidal, then by a well-known result of Kudla \cite[Thm. 6.1(1), 6.2(1)]{kudla1996notes} one knows that $\Theta_{V,W}(\pi)$ is irreducible if it is non-zero. So we can assume that $\pi$ is not supercuspidal. There are $3$ cases. \\

$\bullet$ \textit{Case 1}: there exists a \textit{non conjugate self-dual} $\rho\in\calC(\GL_{d})$ such that $D_{\chi_V\rho}^{(1)}\pi\neq 0$. Then there exists an irreducible representation $\pi_0$ of $G(W_0)$ for some non-degenerate subspace $W_0\subset W$ of codimension $2d$, such that one has a monomorphism
\begin{equation*}\label{E:6.1}\tag{6.1}
   \pi\xhookrightarrow{\quad} \chi_V\rho\rtimes\pi_0.
\end{equation*}
By Lemma \ref{L:BoundExpo} we have $\chi_V\rho\neq \chi_V\nu^{\frac{1-\ell}{2}}$ and $\pi_0\in\calR^\dagger_{V_0}(G(W_0))$. Applying the functor $\Hom_{G(W)}\left(\Omega_{V,W},-\right)_{sm}$ to this monomorphism, it follows from Lemma \ref{L:KudlaFilCompute} that
\[
    \Theta_{V,W}(\pi)^\vee\xhookrightarrow{\quad} \chi_W^c\rho^c\rtimes \Theta_{V_0,W_0}(\pi_0)^\vee.
\]
Then by the induction hypothesis $\Theta_{V_0,W_0}(\pi_0)^\vee$ is irreducible. Since $\chi_W^c\rho^c$ is not conjugate self-dual, by Theorem \ref{T:DerNonSelfDual} the induced representation $\chi_W^c\rho^c\rtimes \Theta_{V_0,W_0}(\pi_0)^\vee$ is SI. The Howe duality implies that
\[
    \theta_{V,W}(\pi)^\vee = \soc\left(\chi_W^c\rho^c\rtimes \Theta_{V_0,W_0}(\pi_0)^\vee\right).
\]

\vskip 5pt

On the other hand, applying both the MVW-involution and contragredient functor to the monomorphism (\ref{E:6.1}) (see \cite[Lem. 2.2]{MR3714507}), we get an epimorphism
\[
    \chi_V\rho'\rtimes\pi_0 \xtwoheadrightarrow{} \pi,
\]
where $\rho' = \left(\rho^c\right)^\vee$. Applying the functor $\Hom_{G(W)}\left(\Omega_{V,W},-\right)_{sm}$ to this epimorphism, it follows from Lemma \ref{L:KudlaFilCompute} and Proposition \ref{P:Ext-Vanish} that we have
\[
    \chi_W^c\left(\rho'\right)^c\rtimes \Theta_{V_0,W_0}(\pi_0)^\vee\xtwoheadrightarrow{}\Theta_{V,W}(\pi)^\vee,
\]
hence playing the same trick of MVW-contragredient \cite[Lem. 2.2]{MR3714507} again we obtain an embedding
\[
    \Theta_{V,W}(\pi)^{MVW}\xhookrightarrow{\quad} \chi_W^c\rho^c\rtimes \Theta_{V_0,W_0}(\pi_0)^\vee.
\]
From this one can see that $\Theta_{V,W}(\pi)$ must be irreducible, otherwise $\chi_W^c\rho^c\rtimes \Theta_{V_0,W_0}(\pi_0)^\vee$ will have two irreducible subquotients isomorphic to the its socle, contradicting the property of being SI. \\

$\bullet$ \textit{Case 2}: $\pi$ is not in \textit{Case 1}, and $\pi$ is non-tempered. In this case by Proposition \ref{P:AM-Prop3.8}, there exists a \textit{conjugate self-dual} $\rho\in\calC_{unit}(\GL_{d})$, such that the highest $\Delta_{\chi_V\rho}[0,-1]$-derivative $\pi_0=D_{\Delta_{\chi_V\rho}[0,-1]}^{(k)}\pi$ is non-zero with $k\geq 1$. So there is a monomorphism
\begin{equation*}\label{E:6.2}\tag{6.2}
   \pi\xhookrightarrow{\quad} \left(\chi_V\Delta_{\rho}[0,-1]\right)^k\rtimes\pi_0.
\end{equation*}

\vskip 5pt

The rest of the proof in this case is similar to \textit{Case 1}. Applying the functor $\Hom_{G(W)}\left(\Omega_{V,W},-\right)_{sm}$ to this monomorphism we get
\[
    \Theta_{V,W}(\pi)^\vee\xhookrightarrow{\quad} \Hom_{G(W)}\left(\Omega_{V,W},\left(\chi_V\Delta_{\rho}[0,-1]\right)^k\rtimes\pi_0\right)_{sm}.
\]
Since $\pi\in\calR^{\dagger}_V(G(W))$, by Lemma \ref{L:BoundExpo} and Lemma \ref{L:KudlaFilCompute} we have
\[
    \Hom_{G(W)}\left(\Omega_{V,W},\left(\chi_V\Delta_{\rho}[0,-1]\right)^k\rtimes\pi_0\right)_{sm} \simeq \left(\chi_W^c\Delta_{\rho^c}[0,-1]\right)^k\rtimes \Theta_{V_0,W_0}(\pi_0)^\vee.
\]
Recall that $\pi$ is not in \textit{Case 1}, in particular $\pi$ is $\chi_V\rho\nu^{-1}$-reduced. By Theorem \ref{T:DerSelfDual}, the highest $\Delta_{\chi_V\rho}[0,-1]$-derivative $\pi_0 = D_{\Delta_{\chi_V\rho}[0,-1]}^{(k)}\pi$ is also $\chi_V\rho\nu^{-1}$-reduced. Now we claim that $\Theta_{V_0,W_0}(\pi_0)^\vee$ is $\chi_W^c\rho^c\nu^{-1}$-reduced. With this claim at hand, we deduce from Theorem \ref{T:DerSelfDual} that the induced representation $\left(\chi_W^c\Delta_{\rho^c}[0,-1]\right)^k\rtimes \Theta_{V_0,W_0}(\pi_0)^\vee$ is SI. Hence the Howe duality implies that
\[
    \theta_{V,W}(\pi)^\vee=\soc\left(\left(\chi_W^c\Delta_{\rho^c}[0,-1]\right)^k\rtimes \Theta_{V_0,W_0}(\pi_0)^\vee\right).
\]
On the other hand applying both the MVW-involution and contragredient functor to the monomorphism (\ref{E:6.2}), we get an epimorphism
\[
    \left(\chi_V\Delta_{\rho}[1,0]\right)^{k}\rtimes\pi_0 \xtwoheadrightarrow{} \pi.
\]
Applying the functor $\Hom_{G(W)}\left(\Omega_{V,W},-\right)_{sm}$ to this epimorphism, it follows from Proposition \ref{P:Ext-Vanish} that we have
\[
    \left(\chi_W^c\Delta_{\rho^c}[1,0]\right)^k\rtimes \Theta_{V_0,W_0}(\pi_0)^\vee\xtwoheadrightarrow{} \Theta_{V,W}(\pi)^\vee,
\]
and hence
\[
    \Theta_{V,W}(\pi)^{MVW} \xhookrightarrow{\quad} \left(\chi_W^c\Delta_{\rho^c}[0,-1]\right)^k\rtimes \Theta_{V_0,W_0}(\pi_0)^\vee.
\]
This implies that $\Theta_{V,W}(\pi)$ must be irreducible.

\vskip 5pt

So it only remains to show our claim. We shall prove it by contradiction. Suppose on the contrary that $\Theta_{V_0,W_0}(\pi_0)^\vee$ is not $\chi_W^c\rho^c\nu^{-1}$-reduced. Then there is a monomorphism
\[
    \Theta_{V_0,W_0}(\pi_0) \xhookrightarrow{\quad} \chi_W\rho\nu^{-1}\rtimes \sigma_0
\]
for some irreducible smooth representation $\sigma_0$ of an appropriate classical group. Here we have made use of the fact that $\Theta_{V_0,W_0}(\pi_0)^\vee$ is irreducible. Then
\[
    \pi_0^\vee \xhookrightarrow{\quad} \Hom_{H(V_0)}\left(\Omega_{V_0,W_0}, \chi_W\rho\nu^{-1}\rtimes \sigma_0\right)_{G(W_0)-sm},
\]
where the subscript ``$G(W_0)-sm$'' means taking $G(W_0)$-smooth vectors. Similar to \cite[Prop. 5.2]{MR3714507}, since $\ell\neq -3$ we have
\[
    \Hom_{H(V_0)}\left(\Omega_{V_0,W_0}, \chi_W\rho\nu^{-1}\rtimes \sigma_0\right)_{G(W_0)-sm}\xhookrightarrow{\quad} \chi_V^c\rho^c\nu^{-1}\rtimes \Sigma_0
\]
for some finite length smooth representation $\Sigma_0$ of an appropriate classical group. Combining these monomorphisms and applying the MVW-involution we get
\[
    \pi_0  \xhookrightarrow{\quad} \chi_V\rho\nu^{-1}\rtimes \Sigma_0^{MVW}.
\]
This contradicts to the fact that $\pi_0$ is $\chi_V\rho\nu^{-1}$-reduced. Hence $\Theta_{V_0,W_0}(\pi_0)^\vee$ must be $\chi_W^c\rho^c\nu^{-1}$-reduced. The proof of this claim is the only place that we need the condition $\ell\neq -3$. \\

$\bullet$ \textit{Case 3}: $\pi$ is not in \textit{Case 1}, and $\pi$ is tempered. In this case we can find a conjugate self-dual irreducible supercuspidal representation $\rho\in\calC_{unit}(\GL_d)$ and an irreducible tempered representation $\pi_0$ of some appropriate group, such that 
\[
    \pi \xhookrightarrow{\quad} \chi_V\rho\rtimes\pi_0
\]
as a direct summand. Applying the functor $\Hom_{G(W)}\left(\Omega_{V,W},-\right)_{sm}$ to this embedding, by Lemma \ref{L:KudlaFilCompute} we have
\[
    \Theta_{V,W}(\pi)^\vee \xhookrightarrow{\quad} \chi_W^c\rho^c\rtimes\Theta_{V_0,W_0}(\pi_0)^\vee.
\]
By the induction hypothesis $\Theta_{V_0,W_0}(\pi_0)^\vee$ is irreducible.  

\vskip 5pt

Now note that $\pi_0$ is tempered. 
\begin{itemize}
\item[-] If $\ell\leq 0$, by \cite[Prop. 11.5]{MR3279536} and \cite[Cor. 3.4]{MR1001840} the theta lift $\theta_{V_0,W_0}(\pi_0)$ can be realized using the integration of matrix coefficients, and it is equipped with an $H(V_0)$-invariant Hermitian form. Moreover according to \cite[Thm. A.5]{MR2906912} this Hermitian form on $\theta_{V_0,W_0}(\pi_0)$ is positive definite. Hence $\theta_{V_0,W_0}(\pi_0)$ is unitary. 

\vskip 5pt

\item[-] If $\ell>0$, by \cite[Thm. 4.3]{MR3714507} the theta lift $\theta_{V_0,W_0}(\pi_0)$ of $\pi_0$ is tempered, hence is unitary as well. 
\end{itemize}
\vskip 5pt
The upshot is that, the unitarity of $\theta_{V_0,W_0}(\pi_0)$ implies that $\chi_W^c\rho^c\rtimes\Theta_{V_0,W_0}(\pi_0)^\vee$ is unitary and semi-simple. Thus $\Theta_{V,W}(\pi)$ is also irreducible and unitary. \\

These complete the proof of this theorem.

\end{proof}



\vskip 10pt

\section{Application: the case of stable range}

The condition that $\ell\neq -3$ in Theorem \ref{T:Main} is very weird. Unfortunately at this point we do not know how to remove this condition, however in the case of stable range we can still show the irreducibility of the big theta lift even if $\ell=-3$.\\ 

We first list out all stable range reductive dual pairs $(G(W),H(V))$ such that $\ell=-3$:

\begin{enumerate}
\item $E\neq F$, $\dim W\leq 3$, and $\dim V = \dim W+3$;

\vskip 5pt

\item $E=F$, $\dim W=2$ or $0$, and $\dim V = \dim W + 3+\epsilon_0$.
\end{enumerate}
\vskip 5pt
In all of these cases, the Witt index $r_W$ of $W$ is at most $1$, and so is the $F$-rank of $G(W)$. Now we are ready to state and prove the main result of this section.

\begin{Thm}\label{T:Main2}
Let $(G(W),H(V))$ be a stable range reductive dual pair with $\ell=-3$. Then for any irreducible smooth representation $\pi\in\calR^{\dagger}_V(G(W))$, the big theta lift $\Theta_{V,W}(\pi)$ is irreducible.
\end{Thm}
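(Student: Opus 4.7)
The plan is to proceed by induction on the Witt index $r_W$ of $W$, following the same structure as the proof of Theorem \ref{T:Main}. Examining the enumeration just before the theorem, every stable range dual pair with $\ell=-3$ satisfies $\dim W\leq 3$, and in particular $r_W\leq 1$, so only two base cases need to be handled. If $r_W=0$, then $G(W)$ is compact and every irreducible $\pi$ is supercuspidal, so Kudla's result \cite[Thm.~6.1, 6.2]{kudla1996notes} already gives the irreducibility of $\Theta_{V,W}(\pi)$ (non-vanishing being automatic in the stable range). When $r_W=1$ and $\pi$ is supercuspidal we again invoke Kudla; otherwise we would run the three-case analysis of Theorem \ref{T:Main}.

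The crucial observation—and the reason the hypothesis $\ell\neq -3$ can be dropped in this setting—is that Case 2 of that analysis is \emph{vacuous} here. Indeed, the existence of a non-zero $\Delta_\rho[0,-1]$-derivative of $\pi$ with $k\geq 1$ requires a maximal parabolic subgroup of $G(W)$ whose Levi component contains $\GL_{2dk}$, and this forces $\dim W\geq 4dk\geq 4$, contradicting $\dim W\leq 3$. Combining this with Proposition \ref{P:AM-Prop3.8}, any non-tempered $\pi$ must then already admit a non conjugate self-dual derivative, i.e., must fall into Case 1. Hence only Cases 1 and 3 can arise.

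It then remains to verify that the Case 1 and Case 3 arguments of Theorem \ref{T:Main} go through without any restriction on $\ell$. A careful audit of that proof shows that the condition $\ell\neq -3$ was invoked at exactly one place, namely inside Case 2 when establishing the $\chi_W^c\rho^c\nu^{-1}$-reducedness of $\Theta_{V_0,W_0}(\pi_0)^\vee$ via a Kudla-filtration analysis. The Case 1 argument relies only on Lemma \ref{L:BoundExpo}, Lemma \ref{L:KudlaFilCompute}, Theorem \ref{T:DerNonSelfDual}, Proposition \ref{P:Ext-Vanish}, the induction hypothesis, and the MVW-contragredient trick, none of which is sensitive to $\ell$. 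The Case 3 argument relies on Lemma \ref{L:KudlaFilCompute} together with the unitarity of $\theta_{V_0,W_0}(\pi_0)$, which for $\ell\leq 0$ in the stable range is provided by the matrix-coefficient realization of the theta lift and the positive-definiteness of the resulting invariant Hermitian form \cite[Thm.~A.5]{MR2906912}. Both cases therefore yield the desired irreducibility of $\Theta_{V,W}(\pi)$, closing the induction. The main obstacle is thus conceptual rather than technical: one has to recognize that a straightforward dimension count on $W$ rules out precisely the single case in which Theorem \ref{T:Main} genuinely used $\ell\neq -3$.
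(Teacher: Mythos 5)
Your proposal is correct, and it keeps the paper's skeleton: induction on $r_W$, Case 1 of Theorem \ref{T:Main} repeated verbatim, and the recognition that the low rank of $G(W)$ (here $r_W\le 1$, $\dim W\le 3$) is what neutralizes the $\ell=-3$ obstruction. Where you genuinely diverge is in the treatment of representations outside Case 1. You first prove Case 2 is vacuous: a non-zero $\Delta_\rho[0,-1]$-derivative needs a Levi $\GL_{2dk}\times G(W_0)$, hence an isotropic subspace of dimension $2dk$, forcing $r_W\ge 2dk\ge 2$ (equivalently $\dim W\ge 4dk\ge 4$), impossible here; so by Proposition \ref{P:AM-Prop3.8} any non-tempered $\pi$ already lies in Case 1, and you then run Case 3 for tempered $\pi$, with the unitarity of $\theta_{V_0,W_0}(\pi_0)$ supplied by the $\ell\le 0$ branch (matrix-coefficient realization plus positivity, \cite[Thm. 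A.5]{MR2906912}), which indeed suffices since $\ell=-3$. The paper instead makes no tempered/non-tempered distinction at all: its Case B covers every $\pi$ not in Case A and exploits that $r_W=1$ forces $W_0$ to be anisotropic, so $G(W_0)$ is compact, $\pi_0$ is automatically unitary (whence $\pi$ is a direct summand of $\chi_V\rho\rtimes\pi_0$ without any appeal to temperedness), and $\Theta_{V_0,W_0}(\pi_0)$ is unitary by \cite[Thm. A]{MR1001840}; that is precisely where the stable range hypothesis is used. The trade-off: your route makes explicit that the single use of $\ell\neq-3$ in Theorem \ref{T:Main} is excluded by a rank count, at the cost of invoking Proposition \ref{P:AM-Prop3.8} and the tempered-case unitarity machinery; the paper's route gets the direct-summand and unitarity statements for free from compactness of $G(W_0)$ and Li's stable-range theorem. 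Both arguments are valid and rest on the same underlying low-rank phenomenon.
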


\vskip 5pt

\begin{proof}
The structure of the proof is the same as Theorem \ref{T:Main}: consider the Witt index $r_W$ of $W$. The basic case (i.e. $r_W=0$) and the supercuspidal case is easy. So now assume that $r_W=1$ and $\pi$ is not supercuspidal. There are $2$ cases. \\

$\bullet$ \textit{Case A}: there exists a \textit{non conjugate self-dual} $\rho\in\calC(\GL_{d})$ such that $D_{\chi_V\rho}^{(1)}\pi\neq 0$. The proof of this case is the same as \textit{Case 1} of Theorem \ref{T:Main}.\\

$\bullet$ \textit{Case B}: $\pi$ is not in \textit{Case A}. In this case we can find a conjugate self-dual $\rho\in\calC_{unit}(\GL_d)$ and an irreducible smooth representation $\pi_0$ of $G(W_0)$ for some non-degenerate subspace $W_0\subset W$ of appropriate codimension, such that there is an monomorphism 
\[
    \pi \xhookrightarrow{\quad} \chi_V\rho\rtimes\pi_0.
\]
Now note that $r_W=1$, this implies that $W_0$ is anisotropic. Hence $G(W_0)$ is compact and $\pi_0$ is unitary. From the unitarity of $\pi_0$ we deduce that $\pi$ is indeed a direct summand of $\chi_V\rho\rtimes\pi_0$. 

\vskip 5pt

The rest of the proof in this case is similar to \textit{Case 3} of Theorem \ref{T:Main}. Applying the functor $\Hom_{G(W)}\left(\Omega_{V,W},-\right)_{sm}$ to the above embedding, by Lemma \ref{L:KudlaFilCompute} we have
\[
    \Theta_{V,W}(\pi)^\vee \xhookrightarrow{\quad} \chi_W^c\rho^c\rtimes\Theta_{V_0,W_0}(\pi_0)^\vee.
\]
Since $G(W_0)$ is compact, $\Theta_{V_0,W_0}(\pi_0)^\vee$ is irreducible, and moreover by \cite[Thm. A]{MR1001840} it is also unitary. Then $\chi_W^c\rho^c\rtimes\Theta_{V_0,W_0}(\pi_0)^\vee$ is unitary and semi-simple, this implies that $\Theta_{V,W}(\pi)$ is also irreducible and unitary. \\

These complete the proof of this theorem.

\end{proof}

\vskip 5pt

\begin{Rmk}
In fact we can prove a stronger result when $\ell=-3$ by using the \textit{Zelevinsky-Aubert duality} and the \textit{Adams' conjecture on the theta lifting} \cite{MR2906916} \cite{bakic2022theta}. However that will make our argument much more technical and complicated. To make the paper simpler and more self-contained, we only deal with the stable range case. 
\end{Rmk}

\vskip 5pt

As an application, we prove the following non-Archimedean analogue of \cite[Thm. A]{MR3305312}.

\begin{Cor}\label{T:AnalogLokeMa}
Suppose that the reductive dual pair $G(W)\times H(V)$ is in the stable range and $G(W)$ is the smaller member. Then for any irreducible unitary representation $\pi$ of $G(W)$, the big theta lift $\Theta_{V,W}(\pi)$ is irreducible.
\end{Cor}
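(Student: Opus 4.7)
The plan is to derive the corollary as a consequence of Theorem~\ref{T:Main} and Theorem~\ref{T:Main2}. Two auxiliary inputs are needed: (i) the big theta lift $\Theta_{V,W}(\pi)$ is nonzero, and (ii) $\pi\in\calR^{\dagger}_V(G(W))$. Once both are in hand, Theorem~\ref{T:Main} (if $\ell \neq -3$) and Theorem~\ref{T:Main2} (if $\ell = -3$) together conclude that $\Theta_{V,W}(\pi)$ is irreducible.

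For (i), the stable range hypothesis with $G(W)$ the smaller member is precisely designed to ensure non-vanishing of theta lifts for every irreducible smooth representation of $G(W)$; this is Kudla's classical non-vanishing theorem, see \cite{kudla1996notes}. For (ii), recall that $\pi\in\calR^{\dagger}_V(G(W))$ amounts to requiring $\chi_V\nu^{\pm(1-\ell)/2}\notin\calF(\pi)$. The stable range condition forces $\ell$ to be sufficiently negative: $\dim V \geq 2\dim W + \dim V_{\rm an}$ gives $\ell \leq -\dim W + \epsilon_0 - \dim V_{\rm an}$, so $(1-\ell)/2 \geq (\dim W + 1 - \epsilon_0 + \dim V_{\rm an})/2$. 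On the other hand, for an irreducible unitary representation $\pi$ of $G(W)$, standard bounds from the classification of the unitary dual of classical groups (M{\oe}glin-Tadi\'{c} and its extensions to unitary and metaplectic groups) imply that the real parts of exponents appearing in the cuspidal support of $\pi$ are bounded strictly below $(1-\ell)/2$ in the stable range. Hence $\chi_V\nu^{\pm(1-\ell)/2}$ cannot occur as factors of $\pi$, verifying (ii).

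The main obstacle is the sharpness of the exponent comparison in (ii) in borderline stable-range cases: for example, when $\dim V = 2\dim W$ and $\epsilon_0 = 1$, the trivial representation of $\Sp(W)$ possesses a factor at precisely the critical exponent $\chi_V\nu^{(1-\ell)/2}$ (for trivial $\chi_V$), so the crude comparison breaks down. Resolving such edge cases requires either adopting a strict stable range convention (as is implicit in the list preceding Theorem~\ref{T:Main2}, in which all stable-range dual pairs with $\ell=-3$ have Witt index $r_W\leq 1$) or employing the refined machinery of Zelevinsky-Aubert duality and Adams' conjecture on theta lifting, as alluded to in the Remark at the end of the previous section. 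With these caveats, Theorem~\ref{T:Main} and Theorem~\ref{T:Main2} combine to give the desired irreducibility of $\Theta_{V,W}(\pi)$.
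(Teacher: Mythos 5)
Your overall route coincides with the paper's: show that every irreducible unitary $\pi$ of $G(W)$ lies in $\calR^{\dagger}_V(G(W))$ (plus non-vanishing, which in the stable range is indeed standard and harmless to add), then quote Theorems \ref{T:Main} and \ref{T:Main2}. The problem is that the only substantive step, the membership $\pi\in\calR^{\dagger}_V(G(W))$, is not actually proved. You appeal to unspecified ``standard bounds from the classification of the unitary dual'' asserting that all exponents in the cuspidal support of a unitary $\pi$ are \emph{strictly} below $(1-\ell)/2$, and then you yourself produce the borderline configuration where such a strict bound is false: $W$ symplectic, $V$ split with $\dim V=2\dim W$, where $(1-\ell)/2=\tfrac{1}{2}\dim W$ is exactly the top exponent occurring among factors of the trivial representation (with $\chi_V$ trivial). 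Having flagged this, you do not resolve it; ``with these caveats'' is not a proof. Neither of your two escape routes closes the gap: adopting a ``strict stable range convention'' proves a different statement from the corollary (and your reading of the list preceding Theorem \ref{T:Main2} as implicitly imposing such a convention is off-target --- that list is about the pairs with $\ell=-3$, which has nothing to do with the borderline equality $(1-\ell)/2=(1+n-\epsilon_0)/2$; in your example $\ell=1-\dim W$), while the Zelevinsky--Aubert/Adams-conjecture machinery is mentioned in the paper only as a way to strengthen the $\ell=-3$ theorem, and in any case you do not carry it out.

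For comparison, the paper closes this step in one line: from the stable range hypothesis it extracts the inequality $\frac{1-\ell}{2}\geq\frac{1+n-\epsilon_0}{2}$ and then invokes the specific result \cite[Thm.~3.6]{MR3711838}, which is quoted as ruling out the critical factors $\chi_V\nu^{\pm\frac{1-\ell}{2}}$ for every irreducible unitary representation under exactly this inequality; it is not the generic ``exponents strictly below $(1-\ell)/2$'' estimate you appeal to. Your borderline observation is a reasonable thing to raise --- settling it requires checking the precise statement of that cited theorem in the equality case rather than a crude exponent bound --- but as written your argument neither establishes the membership $\pi\in\calR^{\dagger}_V(G(W))$ in general nor verifies it in the very case you single out, so the corollary is not proved.
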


\begin{proof}
Since $G(W)\times H(V)$ is in the stable range, we have
\[
    \frac{1-\ell}{2}\geq \frac{1+n-\epsilon_0}{2}.
\]
Then \cite[Thm. 3.6]{MR3711838} implies that for any irreducible unitary representation $\pi$ of $G(W)$ we have $\pi\in\calR^{\dagger}_V(G(W))$. Therefore by Theorem \ref{T:Main} and Theorem \ref{T:Main2} the big theta lift $\Theta_{V,W}(\pi)$ is irreducible.

\end{proof}



\vskip 15pt

\bibliographystyle{alpha}
\bibliography{ThetavsthetaRef}

\end{document}